\def\un{\hbox{\bf 1}}
\def\shuffle{{\scriptscriptstyle \;\sqcup \hspace*{-0.07cm}\sqcup\;}}
\newcommand{\id}{\mathrm{id}}
\definecolor{Light}{gray}{0.85}
\def\abs#1{\left\vert #1 \right\vert}
\def\allpoly{\mbox{$\re\langle X \rangle$}}
\def\allpolyx0degn{\mbox{$P_n$}}
\def\allseries{\mbox{$\re\langle\langle X \rangle\rangle$}}
\def\allseriesm{\mbox{$\re^m\langle\langle X \rangle\rangle$}}
\def\allseriesmtimesm{\mbox{$\re^{m\times m}\langle\langle X \rangle\rangle$}}
\def\allseriesmminusone{\mbox{$\re^{m-1}\langle\langle X \rangle\rangle$}}
\newcommand{\comment}[1]{} 
\def\doubleone{{\rm\, l\!l}}
\def\Endallseries{{\rm End}(\allseries)}
\def\eqref#1{(\ref{#1})} 
\def\id{{\rm id}}
\def\modcomp{\:\tilde{\circ}\,} 
\def\norm#1{\left\Vert#1\right\Vert}
\def\re{{\mathbb R}} 
\def\shuffle{{\scriptscriptstyle \;\sqcup \hspace*{-0.05cm}\sqcup\;}}
\def\begals{\[\begin{aligned}}
\def\endals{\end{aligned}\]}
\def\begce{\begin{center}}
\def\endce{\end{center}}
\def\begar{\begin{array}}
\def\endar{\end{array}}
\def\begeq{\begin{equation}}
\def\endeq{\end{equation}}
\def\begdi{\begin{displaymath}}
\def\enddi{\end{displaymath}}
\def\begdis{\begin{eqnarray*}}
\def\enddis{\end{eqnarray*}}
\def\begeqa{\begin{eqnarray}}
\def\endeqa{\end{eqnarray}}
\def\begdes{\begin{description}}
\def\enddes{\end{description}}
\def\begit{\begin{itemize}}
\def\endit{\end{itemize}}
\def\begen{\begin{enumerate}}
\def\enden{\end{enumerate}}
\def\beglar{\left[\begin{array}}
\def\endrar{\end{array}\right]}
\def\begle{\begin{lemma}}
\def\endle{\end{lemma}}
\def\begde{\begin{definition}}
\def\endde{\end{definition}}
\def\begth{\begin{theorem}}
\def\endth{\end{theorem}}
\def\begco{\begin{corollary}}
\def\endco{\end{corollary}}
\def\begprop{\begin{proposition}}
\def\endprop{\end{proposition}}
\def\begex{\begin{example}}
\def\endex{\end{example}}
\def\begexer{\begin{exercise}}
\def\endexer{\end{exercise}}
\def\begres{\noindent{\bf Remarks}:\begin{enumerate}}
\def\endres{\end{enumerate} \par}
\def\begpr{\begin{proof}}
\def\endpr{\end{proof}}
\def\begtab{\begin{tabular}}
\def\endtab{\end{tabular}}
\def\rref#1{(\ref{#1})}
\def\lbracedef{\left\{\begin{array}{@{\hspace*{2pt}}c@{\hspace*{3pt}}c@{\hspace*{3pt}}l}} 
\begin{document}

\title*{The Fa\`{a} di Bruno Hopf algebra for multivariable feedback recursions in the center problem for higher order Abel equations}
\titlerunning{Fa\`{a} di Bruno Hopf algebra, center problem, higher order Abel equations}

\author{Kurusch Ebrahimi-Fard, W.~Steven~Gray}
\authorrunning{Fa\`{a} di Bruno Hopf algebra, center problem, higher order Abel equations}

\institute{
Kurusch Ebrahimi-Fard \at Norwegian University of Science and Technology, 7491 Trondheim, Norway,\\
On leave from UHA, Mulhouse, France. \email{kurusch.ebrahimi-fard@ntnu.no}.
\and
W.~Steven~Gray \at Old Dominion University,
Norfolk, Virginia 23529 USA,
\email{sgray@odu.edu}}

\maketitle


\abstract{
Poincar\'{e}'s center problem asks for conditions under which a planar polynomial system of ordinary differential equations has a center. It is well understood that the Abel equation naturally describes the problem in a convenient coordinate system. In 1989, Devlin described an algebraic approach for constructing sufficient conditions for a center using a linear recursion for the generating series of the solution to the Abel equation. Subsequent work by the authors linked this recursion to feedback structures in control theory and combinatorial Hopf algebras, but only for the lowest degree case. The present work introduces what turns out to be the nontrivial multivariable generalization of this connection between the center problem, feedback control, and combinatorial Hopf algebras. Once the picture is completed, it is possible to provide generalizations of some known identities involving the Abel generating series. A linear recursion for the antipode of this new Hopf algebra is also developed using coderivations. Finally, the results are used to further explore what is called the composition condition for the center problem.
}

\abstract*{
Poincar\'{e}'s center problem asks for conditions under which a planar polynomial system of ordinary differential equations has a center. It is well understood that the Abel equation naturally describes the problem in a convenient coordinate system. In 1989, Devlin described an algebraic approach for constructing sufficient conditions for a center using a linear recursion for the generating series of the solution to the Abel equation. Subsequent work by the authors linked this recursion to feedback structures in control theory and combinatorial Hopf algebras, but only for the lowest degree case. The present work introduces what turns out to be the nontrivial multivariable generalization of this connection between the center problem, feedback control, and combinatorial Hopf algebras. Once the picture is completed, it is possible to provide generalizations of some known identities involving the Abel generating series. A linear recursion for the antipode of this new Hopf algebra is also developed using coderivations. Finally, the results are used to further explore what is called the composition condition for the center problem.
}

\noindent {\footnotesize{\keywords{center problem, Abel equation, Fa\`a di Bruno Hopf algebra, shuffle algebra, control theory, combinatorial Hopf algebra}}}

\noindent {\footnotesize{\bf MSC Classification}: 34C07; 	34C25; 16T05; 16T30}


\section{Introduction}
\label{sect:intro}

The classical center problem first studied by Henri Poincar\'{e} \cite{Poincare_1928} considers a system of planar ordinary differential equations
\begin{equation}
\label{eq:Poincare-problem}
	\frac{dx}{dt}=X(x,y),\qquad \frac{dy}{dt}=Y(x,y),
\end{equation}
where $X,Y$ are homogeneous polynomials with a linear part of center type.
The equilibrium at the origin is a center if it is contained in an open neighborhood $U$ having no other equilibria,
and every trajectory of system (\ref{eq:Poincare-problem}) in $U$ is closed with the same period $\omega$.
The problem is usually studied in its canonical form via a reparametrization that transforms
(\ref{eq:Poincare-problem}) into the Abel equation
\begin{equation}
\label{Devlineq}
		\dot{z}(t) = v_1(t)z^2(t) + v_2(t)z^3(t),
\end{equation}
where $v_1$ and $v_2$ are continuous real-valued functions \cite{Alwash-Lloyd_87,Cherkas_76,Lloyd_82}. In this setting, the origin $z=0$ is a center if $z(0)=z(\omega)=r$ for $r>0$ sufficiently small and $\omega>0$ fixed. The center problem is to determine the largest class of functions $v_1$ and $v_2$ that will render $z=0$ a center.

An algebraic approach to the  center problem was first proposed by Devlin in 1989 \cite{Devlin_1989,Devlin_1991},
which was based on the work of Alwash and Lloyd \cite{Alwash-Lloyd_87,Lloyd_82}. In modern parlance, Devlin's method was to first write the solution of the Abel equation \rref{Devlineq} in terms of a {\em Chen--Fliess functional expansion} or {\em Fliess operator} \cite{Fliess_81,Fliess_83} whose coefficients are parameterized by $r$. A Fliess operator is simply a weighted sum of iterated integrals of $v_1$ and $v_2$ indexed by words in the noncommuting symbols $x_1$ and $x_2$, respectively.
The concept is widely used, for example, in control theory to describe the input-output map
of a system modeled in terms of ordinary differential equations. (For readers not familiar with this subject, the following references provide
a good overview \cite{Fliess_81, Fliess_83,Isidori_95,Kawski-Sussmann_97,Nijmeijer-vanderSchaft_90,Wang-Sontag_92,Wang-Sontag_92a,Wang-Sontag_95,Wang_90,Wang_95}.) 
Devlin showed that the generating series for his particular Fliess operator
with $r=1$, which is a formal power series $c_A$ over words in the alphabet $X=\{x_1,x_2\}$, can be decomposed as
\begeq \label{eq:Devlin-decomposition}
	c_A=\sum_{n=1}^\infty c_A(n),
\endeq
where the polynomials $c_A(n)$, $n \geq 1$ satisfy the linear recursion
\begin{equation} \label{eq:Devlin-recursion}
	c_A(n)=(n-1)c_A(n-1)x_1+(n-2)c_A(n-2)x_2,\quad n\geq 2
\end{equation}
with $c_A(1)=1$ and $c_A(0)=0$.
Here $\deg(x_i):=i$, and each letter $x_i$ encodes the contribution of $\nu_i$ to the series solution of $\rref{Devlineq}$.
His derivation used the underlying shuffle algebra induced by products of iterated integrals
rather than the fact that the operator coefficients are differentially generated from the vector fields in the Abel
equation \rref{Devlineq} \cite{Fliess_81,Isidori_95,Nijmeijer-vanderSchaft_90}. Devlin also provided a recursion for the higher-order Abel equation
\begeq \label{eq:Abel-eqn-degree-m}
	\dot{z}(t)=\sum_{i=1}^m v_i(t)z^{i+1}(t),\quad m\geq 2,
\endeq
though the calculations become somewhat intractable.
Using such recursions, it was then possible to synthesize various sufficient conditions on the  $v_i$  under which the origin was a center. This included a generalization of the {\em composition condition} of \cite{Alwash-Lloyd_87}. The latter states that a sufficient condition for a center is the existence of a differentiable function $q$ such that $q(\omega)=q(0)$ for some $\omega>0$ and
\begeq \label{eq:composition-condition}
	v_i(t)=\bar{v}_i(q(t)) \dot{q}(t),\quad i=1,\ldots,m,
\endeq
where the $\bar{v}_i$  are continuous functions. For a time it was conjectured that this condition was also a necessary condition for a center if certain constraints were imposed on the $v_i$, for example, if they were polynomial functions of $\cos \omega t$ and $\sin \omega t$. However, a counterexample to this claim was later given by Alwash in \cite{Alwash_89}. It is still believed, however, to be a necessary condition when the $v_i$ are polynomials. This is now called the {\em composition conjecture} (see \cite{Alwash_09,Briskin-etal_10,Brinskin-Yomdin_05,Brudnyi_10,Yomdin_03} and the references in the survey article~\cite{Gine-etal_16}).

\begin{figure}[tb]
\begin{center}
\includegraphics[scale=0.35]{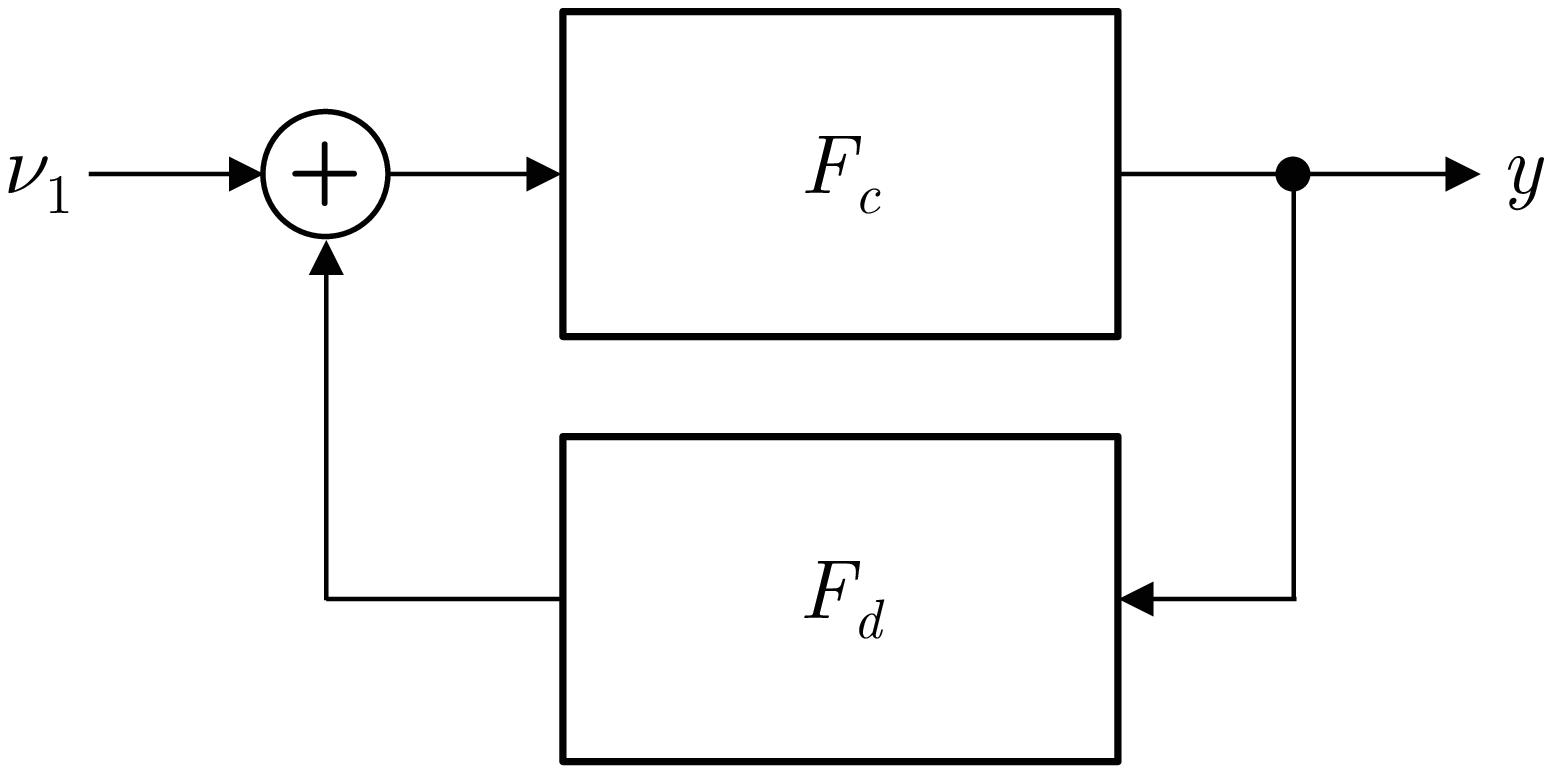}
\end{center}
\caption{Feedback connection of Fliess operators $F_c$ and $F_d$}
\label{fig:output-feedback}
\end{figure}

Recently, the authors revisited Devlin's method in a combinatorial Hopf algebra setting in light of the fact that the Abel equation was found to play a central role in determining the radius of convergence of feedback connected Fliess operators as shown in Figure~\ref{fig:output-feedback} \cite{Thitsa-Gray_12}.
This recursive structure is described by the feedback equation
\begdi
y(t)=F_c[\nu_1(t)+F_d[y(t)]],
\enddi
which by a suitable choice of generating series $c$ and $d$ involving an arbitrary function $\nu_2(t)$ can be written
directly in the form
\begin{align*}
\dot{z}(t)&=z^2(t)[\nu_1(t)+\nu_2(t)y(t)] \\
&=\nu_1(t)z^2(t)+\nu_2(t)z^3(t),
\end{align*}
where $y(t)=z(t)$.
It was shown in \cite{Ebrahimi-Fard-Gray_IMRN17} that the decomposition \rref{eq:Devlin-decomposition} is exactly the sum of the graded components of a Hopf algebra antipode applied to the formal power series $-c_F$, where
\begin{equation}
\label{FFsystem}
	c_F=\sum_{k = 0}^\infty k!\, x_1^k
\end{equation}
is the Ferfera series, that is, the generating series for solution of the equation $\dot{z}=z^2u$, $z(0)=1$ \cite{Ferfera_79,Ferfera_80}. The link is made using the Hopf algebra of output feedback which encodes the {\em composition} of iterated integrals rather than their products \cite{DuffautEFG_2014,Gray-Duffaut_Espinosa_SCL11,Gray-et-al_SCL14}. As a consequence, another algebraic structure at play in Devlin's approach beyond the shuffle algebra is a Fa\`{a} di Bruno type Hopf algebra. Now it is a standard theorem that the antipode of every connected graded Hopf algebra can be computed recursively \cite{Figueroa-Gracia-Bondia_05,manchon2}. This fact was exploited, for example, in the authors' application of the output feedback Hopf algebra to compute the feedback product, a device used to
compute the generating series for Fliess operator representation of the interconnection shown in Figure~\ref{fig:output-feedback} \cite{DuffautEFG_2014,Gray-et-al_MTNS14}. But somewhat surprisingly it was also shown in \cite{Ebrahimi-Fard-Gray_IMRN17} that for this Hopf algebra the antipode could be computed {\em in general} using a linear recursion of Devlin type. This method has been shown empirically to be more efficient than all existing methods for computing the antipode \cite{Berlin-etal_CISS17}, which is useful in control applications \cite{Duffaut_Espinosa-Gray_ACC17,Gray-Duffaut_Espinosa_FdB14,Gray-et-al_CDC15,Gray-et-al_Auto14}. What was not evident, however, was how all of these ideas could be related for higher order Abel equations, i.e., equation \rref{eq:Abel-eqn-degree-m} when~$m>2$.

The goal of this paper is to present what turns out to be the nontrivial generalization of the connection between the center problem, control theory, and combinatorial Hopf algebras for higher order Abel equations. It requires a new class of matrix-valued Fliess operators with a certain Toeplitz structure in order to provide the proper grading. In addition, a new type of multivariable output feedback Hopf algebra is needed, one which is distinct from that described in \cite{DuffautEFG_2014,Gray-Duffaut_Espinosa_SCL11,Gray-et-al_SCL14} and is more closely related to the {\em output affine feedback} Hopf algebra introduced in \cite{Gray-Ebrahimi-Fard_SIAMzz} for the $m=2$ case with $v_2=1$ (so effectively the single-input--single-output case) to describe {\em multiplicative} output feedback. Once the picture is completed, it is possible to provide higher order extensions of some known identities for the Abel generating series, $c_A$. A linear recursion for the antipode of this new Hopf algebra is also developed using coderivations. Finally, a new sufficient condition for a center is given inspired
by viewing the Abel equation in terms of a feedback condition. This in turn provides another way of interpreting the composition condition.

%


\section{Linear recursions for differentially generated series and their inverses}
\label{sect:multiAbel}

The starting point is to show how any formal power series whose coefficients are differentially generated by a set of analytic vector fields can be written in terms of a linear recursion, as can its inverse in a certain compositional sense. This implicitly describes a group that will be utilized in the next section to describe recursions derived from feedback systems.

Consider the set of formal power series $\allseries$ over the set of words $X^\ast$ generated by an alphabet of noncommuting symbols $X=\{x_1,\ldots,x_m\}$. Elements of $X$ are called {\em letters}, and {\em words} over $X$ consist of finite sequences of letters, $\eta=x_{i_1}\cdots x_{i_k} \in X^*$. The {\em length} of a word $\eta$ is denoted $\abs{\eta}$ and is equivalent to the number of letters it contains. When viewed as a graded vector space, where $\deg(x_i):=i$ and $\deg(\mathrm{e}):=0$ with $\mathrm{e}$ denoting the empty word $\emptyset\in X^*$, any $c \in \allseries$ can be uniquely decomposed into its homogeneous components $c=\sum_{n\geq 1} c(n)$ with $\deg(c(n))=n-1$, $n\geq 1$. In particular, if $X^k$ is the set of all words of length $k$, then $c(1)=\langle c,\mathrm{e}\rangle\mathrm{e}$ and
\begeq \label{eq:cn-components}
	c(n)=\sum_{i=1}^{\min(m,n-1)}\sum_{\eta\in X^{n-1-i}} \langle c,\eta x_i\rangle\, \eta x_i,	\quad n\geq 2.
\endeq
A series $c \in \allseries$ is said to be {\em differentially generated} if there exists a set of analytic vector fields $\{g_1,g_2,\ldots,g_m\}$
defined on a neighborhood $W$ of $z_0\in\re^n$ and an analytic function $h:W \to \re$ such that for every word $\eta$ in $X^\ast$
the corresponding coefficient of $c$ can be written as
\begin{displaymath}
	\langle c,\eta \rangle=L_{g_{j_1}}\cdots L_{g_{j_k}}h(z_0), \quad \eta=x_{j_k}\cdots x_{j_1},
\end{displaymath}
where the Lie derivative of $h$ with respect to $g_j$ is defined as the linear operator
\begin{displaymath}
	L_{g_j} h: W\rightarrow \re,\quad z \mapsto L_{g_j} h(z):=\frac{\partial h}{\partial z}(z)\, g_j(z).
\end{displaymath}
The tuple $(g_1,g_2,\ldots,g_m,z_0,h)$ will be referred to as the {\em generator} of $c$.
It follows directly that $c(n)=P_{n-1}(z_0)$, where $P_{0}(z_0)=h(z_0)\mathrm{e}$ and for $n>0$, $P_n(z):=\sum_{\eta\in X^n} L_{g_\eta}h(z)\,\eta$, with $L_{g_\eta}:=L_{g_{j_1}}\cdots L_{g_{j_k}}$, and \rref{eq:cn-components} can be rewritten as the linear recursion
\begeq \label{eq:Pn-recursion}
	P_n(z_0)=\sum_{i=1}^{\min(m,n)} L_{g_i}P_{n-i}(z_0)x_i, \quad n\geq 1.
\endeq
In this case the grading on $\allseries$ can be encoded in the sequence $P_n(z_0)$, $n\geq 1$, by assigning degrees to the vector fields, namely, $\deg(g_i):=\deg(x_i)=i$, $i=1,\ldots,m$.

\begin{example} \label{ex:Ferfera-recursion}
Suppose $m=1$, $g_1(z)=z^2$, $z_0=1$, and $h(z)=z$. Then $c(1)=P_0(1)=h(1)\mathrm{e}=\mathrm{e}$ and
\begdi
	c(n)=P_{n-1}(1)=L_{z^2}P_{n-2}(1)x_1
	      =(n-1)P_{n-2}(1)x_1=(n-1)c(n-1)x_1,	\quad n\geq 2.
\enddi
In which case,
\begdi
	\sum_{n=1}^\infty c(n)
	      =\sum_{n=1}^\infty (n-1)!\,x_1^{n-1}
	      =\sum_{n=0}^\infty n!\,x_1^n=:c_F. \label{eq:c_F}
\enddi
This is the well studied generating series of Ferfera \cite{Ferfera_79,Ferfera_80}.
\end{example}

Now suppose $d\in\allseries$ is differentially generated, and consider the corresponding {\it{Chen--Fliess series}} or {\it{Fliess
operator}}
\begdi
	F_d[u](t) :=\sum_{\eta\in X^{\ast}} \langle d,\eta \rangle\,E_\eta[u](t,t_0), 
\enddi
where $E_\eta[u]$ is defined inductively for each word $\eta \in X^{\ast}$ as an iterated integral over the {\it{controls}} $u:=(u_1(t),\ldots,u_m(t))$, $u_i: [t_0,t] \rightarrow\re$, by $E_\emptyset[u]:=1$ and
\begdi
	E_{x_i\bar{\eta}}[u](t,t_0) := \int_{t_0}^tu_{i}(\tau)E_{\bar{\eta}}[u](\tau,t_0)\,d\tau
\enddi
with $x_i\in X$, $\bar{\eta}\in X^{\ast}$.
If $u \in L_1^m[t_0,t_0+T]$, that is, $u$ is measurable with finite $L_{1}$-norm, $\norm{u}_{L_1}:=\max\{\norm{u_i}_{1}: \ 1\le
i\le m\}<R$, then the analyticity of the generator for $d$ is sufficient to guarantee that the Fliess operator $F_d[u](t)$ converges absolutely and uniformly on $[0,T]$ for sufficiently small $R,T>0$ \cite{Gray-Wang_SCL02}. Suppose next that $d=(d_1,\ldots,d_{m-1})$ is a family of series $d_i\in\allseries$, $i=1,\ldots,m-1$ which are differentially generated by $(g_1,\ldots,g_m,z_0,h_1,\ldots,h_{m-1})$, and define the associated {\it{Toeplitz matrix}}
\begdi
	d_{{\rm{Toep}}}:=
	\left[
		\begin{array}{ccccc}
		1 & d_1 & d_2 & \cdots & d_{m-1} \\
		0 & 1   & d_1 & \cdots & d_{m-2} \\
	 \vdots  & \vdots  & \vdots & \vdots & \vdots \\
		0 & 0 & 0 & \cdots & d_1 \\
		0 & 0   & 0 & \cdots & 1
		\end{array}
	\right]
	=I+\sum_{i=1}^{m-1} d_i N^i,
\enddi
where $I\in\re^{m\times m}$ is the identity matrix, and $N\in\re^{m\times m}$ is the nilpotent matrix consisting of zero entries except for a super diagonal of ones. The {\em Toeplitz affine Fliess operator} is taken to be $y=F_{d_\delta}[u]:=F_{d_{{\rm{Toep}}}}[u]u$, which can be written in expanded form~as
\begdi
	\left[
	\begin{array}{c}
		y_1\\
		y_2 \\
		\vdots\\
		y_{m-1} \\
		y_m
	\end{array}
	\right]=
	\left[
	\begin{array}{ccccc}
	1 & F_{d_1}[u] & F_{d_2}[u] & \cdots & F_{d_{m-1}}[u] \\
	0 & 1   & F_{d_1}[u] & \cdots & F_{d_{m-2}}[u] \\
	\vdots  & \vdots  & \vdots & \vdots & \vdots \\
	0 & 0 & 0 & \cdots & F_{d_1}[u] \\
	0 & 0   & 0 & \cdots & 1
	\end{array}
	\right]
	\left[
	\begin{array}{c}
	u_1\\
	u_2 \\
	\vdots\\
	u_{m-1} \\
	u_m
	\end{array}
	\right].
\enddi
Note in particular that $0_{{\rm{Toep}}}=I$ so that $F_{0_\delta}[u]=u$. The operator $F_{d_\delta}$ is {\em realized} by the analytic state space system
\begin{subequations}
	\begin{align}
	\dot{z}&= \sum_{i=1}^m g_i(z)u_i, \quad z(0)=z_0 \label{eq:state-equation} \\
		y&=H(z)u,
\end{align}
\end{subequations}
where
\begeq \label{eq:Toeplitz-matrix}
	H=
	\left[
	\begin{array}{ccccc}
	1 & h_1 & h_2 & \cdots & h_{m-1} \\
	0 & 1   & h_1 & \cdots & h_{m-2} \\
	\vdots  & \vdots  & \vdots & \vdots & \vdots \\
	0 & 0 & 0 & \cdots & h_1 \\
	0 & 0   & 0 & \cdots & 1
	\end{array}
	\right]
	=I+\sum_{i=1}^{m-1} h_i N^i,
\endeq
in the sense that on some neighborhood $W$ of $z_0$, \rref{eq:state-equation} has a well defined solution $z(t)$ on $[t_0,t_0+T]$ and $y=F_{d_{{\rm{Toep}}}}[u]u=H(z)u$ on this same interval. Since the Toeplitz matrix $H$ is always invertible and Toeplitz, it follows that the inverse operator $u=F_{d_\delta^{-1}}[y]:=F_{d_{{\rm{Toep}}}^{-1}}[y]y$ is another Toeplitz affine Fliess operator realized by the state space system
\begin{subequations} \label{eq:inverse-affine-FO}
\begin{align}
	\dot{z}&=\sum_{i=1}^m g_i(z)[H^{-1}(z)y]_i,\quad z(0)=z_0 \\
		u&=H^{-1}(z)y.
\end{align}
\end{subequations}
so that $F_{d_\delta}\circ F_{d_\delta^{-1}}=F_{d_\delta^{-1}}\circ F_{d_\delta}=I$. (Here $[y]_i$ denotes the $i$ component of $y\in\re^m$.) The generating series for the inverse operator, $d^{-1}=(d^{-1}_1,\ldots,d^{-1}_{m-1})$, is differentially generated by $(\tilde{g}_1,\ldots,\tilde{g}_m,z_0,\tilde{h}_1,\ldots,\tilde{h}_{m-1})$, where $\tilde{g}_i:= \sum_{j=1}^m g_jH^{-1}_{ji}=g_i + \sum_{j=1}^{i-1} g_{i-j}\tilde{h}_j$ with $\tilde{h}_j:=H^{-1}_{1,1+j}$.

\begin{example} \label{ex:mequal3-Toeplitz-affine-inverse-realization}
For the case where $m=3$, system (\ref{eq:inverse-affine-FO}) becomes
\begin{subequations} \label{eq:mequal3-Toeplitz-affine-inverse-realization}
\begin{align}
	\dot{z}&= g_1y_1+(g_2-g_1h_1)y_2+(g_3-g_2h_1+g_1(h_1^2-h_2))y_3, \quad z(0)=z_0 \\
	\left[
	\begin{array}{c}
	u_1\\
	u_2 \\
	u_3
	\end{array}
	\right]&=
	\left[
	\begin{array}{ccc}
	1 & -h_1 & h_1^2-h_2 \\
	0 & 1 & -h_1 \\
	0 & 0 & 1
	\end{array}
	\right]
	\left[
	\begin{array}{c}
	y_1\\
	y_2 \\
	y_3
	\end{array}
	\right].
\end{align}
\end{subequations}
In which case,
\begdi
	F_{d_{\rm{Toep}}^{-1}}[y]=
		\left[
		\begin{array}{ccc}
		1 & F_{d_1^{-1}}[y] & F_{d_2^{-1}}[y]  \\
		0 & 1   & F_{d_1^{-1}}[y] \\
		0 & 0 & 1
		\end{array}
		\right],
\enddi
where $d^{-1}=(d_1^{-1},d_2^{-1})$ is generated by $(\tilde{g}_1,\tilde{g}_2,\tilde{g}_3,z_0,\tilde{h}_1,\tilde{h}_2)$ with $\tilde{g}_1:=g_1$, $\tilde{g}_2:=g_2-g_1h_1$, $\tilde{g}_3:=g_3-g_2h_1+g_1(h_1^2-h_2)$, $\tilde{h}_1=-h_1$ and $\tilde{h}_2=h_1^2-h_2$. If coordinate functions are defined as linear maps on $\re^2\langle\langle X \rangle\rangle$ by
\begdi
	a^i_\eta(d):=(d_i,\eta)=L_{g_\eta} h_i(z_0),\quad \eta\in X^\ast, \;\;i=1,2,
\enddi
and $S$ is defined as a mapping on $\allpoly$ seen as dual space of $\allseries$, so that
\begdi
	(S(a^i_\eta))(d):=(d_i^{-1},\eta)=L_{\tilde{g}_\eta}\tilde{h}_i(z_0),\quad \eta\in X^\ast, \;\;i=1,2,
\enddi
then the coordinates, i.e., coefficients of the inverse series are described compactly by the following polynomials:
\begin{subequations} \label{antipode1time}
\begin{align}
	S(a^1_\mathrm{e})&=-a^1_\mathrm{e} \\
	S(a^2_\mathrm{e})&=-a^2_\mathrm{e}+a^1_\mathrm{e}a^1_\mathrm{e} \\
	S(a^1_{x_1})&=-a^1_{x_1} \\
	S(a^1_{x_2})&=-a^1_{x_2}+a^1_{x_1}a^1_\mathrm{e} \\
	S(a^2_{x_1})&=-a^2_{x_1}+2a^1_{x_1}a^1_\mathrm{e} \\
	S(a^1_{x_3})&=-a^1_{x_3}+a^1_{x_2}a^1_\mathrm{e}-a^1_{x_1}a^1_\mathrm{e}a^1_\mathrm{e}+a^1_{x_1}a^2_\mathrm{e} \\
	S(a^2_{x_2})&=-a^2_{x_2}+2a^1_{x_2}a^1_\mathrm{e}-2a^1_{x_1}a^1_\mathrm{e}a^1_\mathrm{e}+a^2_{x_1}a^1_\mathrm{e} \\
	S(a^2_{x_3})&=-a^2_{x_3}+2a^1_{x_3}a^1_\mathrm{e}-2a^1_{x_2}a^1_\mathrm{e}a^1_\mathrm{e}
					+a^2_{x_2}a^1_\mathrm{e}-a^2_{x_1}a^1_\mathrm{e}a^1_\mathrm{e}+a^2_{x_1}a^2_\mathrm{e}+ \nonumber\\
					&\hspace*{0.2in}2a_{x_1}^1a_\mathrm{e}^1a_\mathrm{e}^1a_\mathrm{e}^1-
				2a^1_{x_1}a_\mathrm{e}^1a_\mathrm{e}^2  \\
				&\hspace*{0.08in}\vdots\nonumber
\end{align}
\end{subequations}
\end{example}

 It is not obvious in general whether the generators for the inverse series $d^{-1}_i$ will necessarily satisfy a linear recursion of the form \rref{eq:Pn-recursion}. This is contingent on whether the new vector fields $\tilde g_i$ are consistent with the grading on $\allseries$, that is, whether $\deg(\tilde g_i)=\deg(g_i)$, $i=1,\ldots,m$. The next theorem gives a sufficient condition under which the upper triangular Toeplitz structure of $H$ in \rref{eq:Toeplitz-matrix} guarantees this property.

\begin{theorem} \label{th:Toeplitz-inverse-realization}
Given any Toeplitz matrix of the form \rref{eq:Toeplitz-matrix} and a set of vector fields $g_i$, $i=1,\ldots,m$ with
$\deg(g_i)=i$, it follows that $\tilde{g}_i:=\sum_{j=1}^m g_{j}H^{-1}_{ji}$ has the property $\deg(\tilde g_i)=\deg(g_i)$
provided $\deg(h_i):=\deg(g_i)=i$, $i=1,\ldots,m-1$.
\end{theorem}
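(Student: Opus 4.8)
The plan is to reduce everything to a homogeneity statement about the entries of $H^{-1}$ and then read off the degree of $\tilde g_i$ from the identity $\tilde g_i = \sum_{j=1}^m g_j H^{-1}_{ji}$. Throughout I use that, in the grading induced by the assignment $\deg(g_i)=i$, each $L_{g_i}$ raises the weight of a function by $i$ (so $g_i$ is a homogeneous derivation of degree $i$), that by hypothesis $\deg(h_i)=i$, and that weights add when a function multiplies a vector field. Under these conventions it suffices to show that the entry of $H^{-1}$ on its $k$-th superdiagonal is homogeneous of degree $k$.

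First I would pin down $H^{-1}$. Writing $M:=-\sum_{i=1}^{m-1} h_i N^i$, we have $H=I-M$ with $M$ nilpotent ($M^m=0$ because $N^m=0$), so $H^{-1}$ exists and equals the terminating Neumann series $H^{-1}=\sum_{k=0}^{m-1} M^k$. Since the scalar functions $h_i$ commute with the $N^j$, expanding each power and collecting by powers of $N$ shows $H^{-1}$ is again unipotent upper triangular Toeplitz, $H^{-1}=I+\sum_{p=1}^{m-1}\tilde h_p N^p$, where
\[
	\tilde h_p \;=\; \sum_{k\geq 1}(-1)^k \sum_{\substack{i_1+\cdots+i_k = p\\ i_j\geq 1}} h_{i_1}\cdots h_{i_k}, \qquad 1\leq p\leq m-1 .
\]
Every monomial $h_{i_1}\cdots h_{i_k}$ appearing here has $i_1+\cdots+i_k=p$ and hence, since weights add under multiplication, is homogeneous of degree $p$; therefore $\deg(\tilde h_p)=p$. (Equivalently, $\tilde h_p=-h_p-\sum_{i=1}^{p-1} h_i\tilde h_{p-i}$ follows from $HH^{-1}=I$, and $\deg(\tilde h_p)=p$ follows by induction on $p$ from the base case $\tilde h_1=-h_1$.)

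Finally, from $H^{-1}=I+\sum_p\tilde h_p N^p$ and $(N^p)_{ji}=\delta_{i,j+p}$ one gets $H^{-1}_{ji}=\tilde h_{i-j}$ for $j\leq i$ (with $\tilde h_0:=1$) and $0$ otherwise, so
\[
	\tilde g_i \;=\; \sum_{j=1}^m g_j H^{-1}_{ji} \;=\; \sum_{j=1}^{i} g_j\,\tilde h_{i-j},
\]
where each index satisfies $0\leq i-j\leq m-1$ because $1\leq j\leq i\leq m$, so every $\tilde h_{i-j}$ is one of the entries treated above. Each summand is then homogeneous of degree $\deg(g_j)+\deg(\tilde h_{i-j})=j+(i-j)=i$, whence $\deg(\tilde g_i)=i=\deg(g_i)$, as claimed. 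The one nontrivial point is the homogeneity of the $\tilde h_p$ — that inverting a unipotent upper triangular Toeplitz matrix preserves the weight of the entries — and this is precisely where the hypothesis $\deg(h_i)=i$ is used; the remainder is bookkeeping with the Toeplitz/nilpotent structure.
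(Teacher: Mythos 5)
Your proof is correct and follows essentially the same route as the paper: expand $H^{-1}$ as the terminating Neumann series, observe that the coefficient $\tilde h_p$ of $N^p$ is a sum of monomials in the $h_i$ of total weight $p$, and then add degrees in $\tilde g_i=\sum_j g_j\tilde h_{i-j}$. The only cosmetic difference is that the paper collects the series via the multinomial theorem (exponent vectors with multinomial coefficients) where you sum over ordered compositions of $p$ (and also note the equivalent inductive recursion from $HH^{-1}=I$), which changes nothing of substance.
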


\begpr
First observe that
\begdi
	H^{-1}=\left(I+\sum_{i=1}^{m-1} h_i N^i\right)^{-1}
		    =\sum_{n=0}^{m-1} (-1)^n\left(\sum_{i=1}^{m-1} h_i N^i\right)^n,
\enddi
using the fact that $N^n=0$, $n\geq m$. Now applying the multinomial theorem gives
\begin{align} \label{eq:Hinv-sum}
	H^{-1}&=I+\sum_{j=1}^{m-1} \left[\sum_{k=1}^j  (-1)^k k! \sum_{k_1+k_2+\cdots +k_j=k \atop k_1+2k_2+\cdots+jk_{j}=j}
				 \frac{1}{k_1!\cdots k_{j}!} h_1^{k_1}\cdots h_{j}^{k_{j}}\right] N^j. \\
		    &=:I+\sum_{j=1}^{m-1} \tilde{h}_j N^j. \nonumber
\end{align}
This means that $\deg(\tilde{h}_j)= \deg(h_1^{k_1}\cdots h_j^{k_{j}})=k_1+2k_2+\cdots+jk_{j}=j$. Therefore, since $\tilde{g}_i=\sum_{j=1}^m g_jH^{-1}_{ji}=\sum_{j=0}^i g_{i-j}\tilde{h}_j$ and
\begdi
	\deg\left(g_{i-j}\tilde{h}_j\right)
		=\deg\left(g_{i-j}\right)+\deg(\tilde{h}_j)
		=(i-j)+j
		=i,
\enddi
it follows that $\deg(\tilde g_i)=i$, $i=1,\ldots,m$ as required.
\endpr

\begin{example}
Reconsider Example~\ref{ex:mequal3-Toeplitz-affine-inverse-realization} in the particular case where
$(g_1,g_2,g_3,z_0,h_1,h_2)=(z^2,0,0,1,-z,0)$ so that $d=(-c_F,0)$.
This is an embedding of Example~\ref{ex:Ferfera-recursion} into the case where $m=3$.
The series $d^{-1}=(d_1^{-1},d_2^{-1})$ has the
generator $(\tilde{g}_1,\tilde{g}_2,\tilde{g}_3,z_0,\tilde{h}_1,\tilde{h}_2)=(z^2,z^3,z^4,1,z,z^2)$.
The system \rref{eq:mequal3-Toeplitz-affine-inverse-realization} reduces to the Abel system
\begin{align*}
\dot{z}&= z^2y_1+z^3y_2+z^4y_3,\quad z(0)=1 \\
\left[
\begin{array}{c}
u_1\\
u_2 \\
u_3
\end{array}
\right]&=
\left[
\begin{array}{ccc}
1 & z & z^2 \\
0 & 1 & z \\
0 & 0 & 1
\end{array}
\right]
\left[
\begin{array}{c}
y_1\\
y_2 \\
y_3
\end{array}
\right],
\end{align*}
and therefore, $c_{A,m}=d^{-1}_1$.
Using \rref{eq:Pn-recursion} with the generator for $d_1^{-1}$,
the Abel generating series $c_{A,3}$ can also be written as $c_{A,3}=\sum_{n\geq 1} c_{A,3}(n)$, where $c_{A,3}(1)=P_0(1)=\mathrm{e}$ and
\begdi
	c_{A,3}(n)=P_{n-1}(1)=L_{z^{2}}P_{n-2}(1)x_1+L_{z^{3}}P_{n-3}(1)x_2+L_{z^{4}}P_{n-4}(1)x_3,\quad n\geq 2
\enddi
($P_n(1):=0$ for $n<0$). A polynomial recursion follows from proving the identity $L_{z^{i+1}}P_{n-i-1}(1)=(n-i)P_{n-i-1}(1)$, $i=1,2,3$, so that
\begdi
	c_{A,3}(n)=(n-1)c_{A,3}(n-1)x_1+(n-2)c_{A,3}(n-2)x_2+(n-3)c_{A,3}(n-3)x_3,\quad n\geq 2
\enddi
($c_{A,3}(n)=0$ for $n<1$). The first few of these polynomials are:
\begin{subequations} \label{eq:cA3n}
\begin{align}
	c_{A,3}(1)&=1\\
	c_{A,3}(2)&=x_1 \\
	c_{A,3}(3)&=2x_1x_1+x_2 \\
	c_{A,3}(4)&=6x_1x_1x_1+3x_2x_1+2x_1x_2+x_3 \\
	c_{A,3}(5)&=24x_1x_1x_1x_1+12x_2x_1x_1+8x_1x_2x_1+4x_3x_1+6x_1x_1x_2+3x_2x_2+2x_1x_3.
\end{align}
\end{subequations}
Note that each $c_{A,3}(n)$ consists only of words of degree $n-1$. These polynomials were first identified by Devlin in \cite{Devlin_1989}.
The example can be generalized to any $m\geq 2$ so that
\begeq \label{eq:cAm-as-cF-inverse}
	c_{A,m}=(I-c_FN)^{-1}_1,
\endeq
and the corresponding Abel series $c_{A,m}=\sum_{n\geq 1} c_{A,m}(n)$ can be computed
from the recursion
\begdi
	c_{A,m}(n)=\sum_{i=1}^m (n-i)c_{A,m}(n-i)x_i,\quad n\geq 2, 
\enddi
with $c_{A,m}(1)=1$ and $c_{A,m}(n)=0$ for $n<1$.
\end{example}

It is interesting to note that the construction above has some elements in common with the Fa\`{a} di Bruno Hopf algebra ${\mathcal H}_{FdB}=(\mu,\Delta_{FdB})$ for the group $\mathcal{G}_{diff}$ of diffeomorphisms $h$ on $\re$ satisfying $h(0)=0$, $\dot{h}(0)=1$. See  \cite{Figueroa-Gracia-Bondia_05} for details. First observe that \rref{eq:Hinv-sum} can also be written in terms of the Bell polynomials
\begdi
	B_{j,k}(t_1,\ldots,t_l):=\sum_{k_1+k_2+\cdots +k_{l}=k \atop k_1+2k_2+\cdots+lk_{l}=j}
	\frac{j!}{k_1!\cdots k_{l}!} \left(\frac{t_1}{1!}\right)^{k_1}\cdots \left(\frac{t_l}{l!}\right)^{k_{l}},
\enddi
where $l=j-k+1$, using the Fa\`{a} di Bruno formula
\begdi
	f(h(t))=\sum_{j=0}^\infty \sum_{k=1}^j \beta_k B_{j,k}(\alpha_1,\ldots,\alpha_{j-k+1})\frac{t^j}{j!}
\enddi
with $f(t):=\sum_{n=0}^\infty \beta_n t^n/n!$ and $h(t)=\sum_{n=0}^\infty \alpha_n t^n/n!$. Specifically, setting
\begin{align*}
	 f(t)&=\frac{1}{1+t}=1+\sum_{n=1}^\infty (-1)^n n!\frac{t^n}{n!} \\
	h(t)&=\sum_{n=1}^{m-1} n!h_n \frac{t^n}{n!}
\end{align*}
gives
\begdi
	H^{-1}=f(h(N))=I+\sum_{j=1}^{m-1} \sum_{k=1}^j (-1)^k \left[\frac{k!}{j!} B_{j,k}(h_1,2! h_2,\ldots,(j-k+1)!h_{j-k+1})\right] N^j.
\enddi
The expressions in brackets above, i.e., the {\em ordinary} Bell polynomials, are used in \cite{Brudnyi_xxBdSM} to define a variation (flipped/co-opposite version) of the coproduct $\bar{\Delta}_{FdB}$ on ${\mathcal H}_{FdB}$ (see equations (4.1)-(4.2) in this citation).
A faithful representation of the group $\mathcal{G}_{diff}$ is
\begin{align*}
	M_h&:=\left[\frac{k!}{j!} B_{j,k}(h_1,2! h_2,\ldots,(j-k+1)!h_{j-k+1})\right]^T \\
		&=\left[\begin{array}{cccccc}
 			h_1 & h_2 & h_3 & h_4 & h_5 & \cdots \\
 			   0 & h_1^2 & 2 h_1 h_2 & h_2^2+2 h_1 h_3 & 2 h_2 h_3+2 h_1 h_4 & \cdots\\
 			   0 & 0 & h_1^3 & 3 h_1^2 h_2 & 3 h_1 h_2^2+3 h_1^2 h_3 & \cdots\\
 			   0 & 0 & 0 & h_1^4 & 4 h_1^3 h_2 & \cdots\\
 			   0 & 0 & 0 & 0 & h_1^5 & \cdots\\
 \vdots & \vdots & \vdots & \vdots & \vdots
\end{array}\right]
\end{align*}
(cf.~\cite{manchonfrabetti}) and
\begdi
	H^{-1}=I+\sum_{j=1}^{m-1} [\doubleone M_h]_j N^j=I+\sum_{j=1}^{m-1}\tilde{h}_jN^j,
\enddi
where $\doubleone:=[-1\;1\;-1\;\cdots]$. Therefore, defining the coordinate functions $a_i(h)=h_i$, $i\geq 1$, it follows that
$
	\tilde{h}_i= \mu(\bar{\Delta}_{FdB} a_i(\doubleone,h)).
$
For example,
\begin{align*}
	\tilde{h}_3	&=\mu(\bar{\Delta}_{FdB}(a_3)(\doubleone,h)) \\
				&=\mu((a_1\otimes a_3+a_2\otimes 2a_1a_2+a_3\otimes a_1^3)(\doubleone,h)) \\
				&=-h_3+2h_1h_2-h_1^3.
\end{align*}
Further observe, setting $h_1=1$, that the antipode $S_{FdB}$ of ${\mathcal H}_{FdB}$ can be identified from the top row of
\begdi
M_h^{-1}
	=\left[\begin{array}{cccccc}
 	1 & -h_2 & 2 h_2^2-h_3 & -5 h_2^3+5 h_2 h_3-h_4 & 14 h_2^4-21 h_2^2 h_3+3 h_3^2+6 h_2 h_4-h_5 & \cdots \\
 	0 & 1 & -2 h_2 & 5 h_2^2-2 h_3 & -14 h_2^3+12 h_2 h_3-2 h_4 & \cdots \\
 	0 & 0 & 1 & -3 h_2 & 9 h_2^2-3 h_3 & \cdots \\
 	0 & 0 & 0 & 1 & -4 h_2 & \cdots \\
 	0 & 0 & 0 & 0 & 1 & \cdots \\
 	\vdots & \vdots & \vdots & \vdots & \vdots
\end{array}\right].
\enddi
That is, using a standard expression for $S_{FdB}$ (see \cite{Figueroa-Gracia-Bondia_05}), the $(j+1)th$ entry in the top row of $M_h^{-1}$ is given by
\begin{align*}
	(S_{FdB}(a_{j+1}))(h)&=\sum_{k=1}^{j} (-1)^k B_{j+k,k}(0,2!h_2,3!h_3,\ldots,(j+1)!h_{j+1}),\quad j\geq 1.
\end{align*}
The assertion to be explored in Section~\ref{sect:multiHA} is that this construction has deeper connections to another kind of Fa\`{a} di Bruno type Hopf algebra, one that is derived from a group of Fliess operators and used to described their feedback interconnection. In fact, the compositional inverse described above corresponds to the group inverse.


\section{Devlin's polynomials and feedback recursions}
\label{sect:feedback-recursions}

It was shown in \cite{Ebrahimi-Fard-Gray_IMRN17} that the Devlin polynomials describing the Abel generating series $c_{A,m}$ when $m=2$ can be related to a certain feedback structure commonly encountered in control theory. This in turn led to a Hopf algebra interpretation of these polynomials since feedback systems have been characterized in such terms in \cite{DuffautEFG_2014,Gray-Duffaut_Espinosa_SCL11,Gray-Duffaut_Espinosa_FdB14,Gray-et-al_MTNS14,Gray-et-al_SCL14}.
In this section, the generalization of the theory is given for any $m\geq 2$. This will again provide a Hopf algebra interpretation of Devlin's polynomials as well as a shuffle formula for the Abel series which is distinct from that derived directly from the Abel equation, namely, the non-linear recursion
\begdi
	c_{A,m}=1+\sum_{i=1}^m x_ic_{A,m}^{\shuffle\,i+1},\quad m\geq 2,
\enddi
where $c_{A,m}^{\shuffle\,i}$ denotes the $i$-th shuffle power of $c_{A,m}$. Recall that $\allseries$ consisting of  all formal power series over the alphabet $X$ with coefficients in $\mathbb{R}$ forms an unital associative $\re$-algebra under the concatenation product and an unital, commutative and associative $\re$-algebra under the shuffle product, denoted here by the shuffle symbol $\shuffle$. The
latter is the $\re$-bilinear extension of the shuffle product of two words, which is defined inductively by
\begin{equation}
\label{shuffleproduct}
	(x_i\eta) \shuffle (x_j\xi)=x_i(\eta\shuffle(x_j\xi))+x_j((x_i\eta)\shuffle \xi)
\end{equation}
with $\eta\shuffle\emptyset=\emptyset\shuffle\eta=\eta$ for all words
$\eta,\xi\in X^\ast$ and letters $x_i,x_j\in X$ \cite{Fliess_81,reutenauer}.
For instance, $x_i \shuffle x_j = x_ix_j + x_jx_i$ and
\begin{equation*}
	x_{i_1} x_{i_2}\shuffle x_{i_3} x_{i_4} = x_{i_1} x_{i_2}x_{i_3} x_{i_4}
	+ x_{i_3} x_{i_4}x_{i_1} x_{i_2} + x_{i_1} x_{i_3}(x_{i_2}\shuffle x_{i_4})
	+ x_{i_3} x_{i_1}(x_{i_2}\shuffle x_{i_4}).
\end{equation*}

\begin{figure}[tb]
\begin{center}
\includegraphics[scale=0.35]{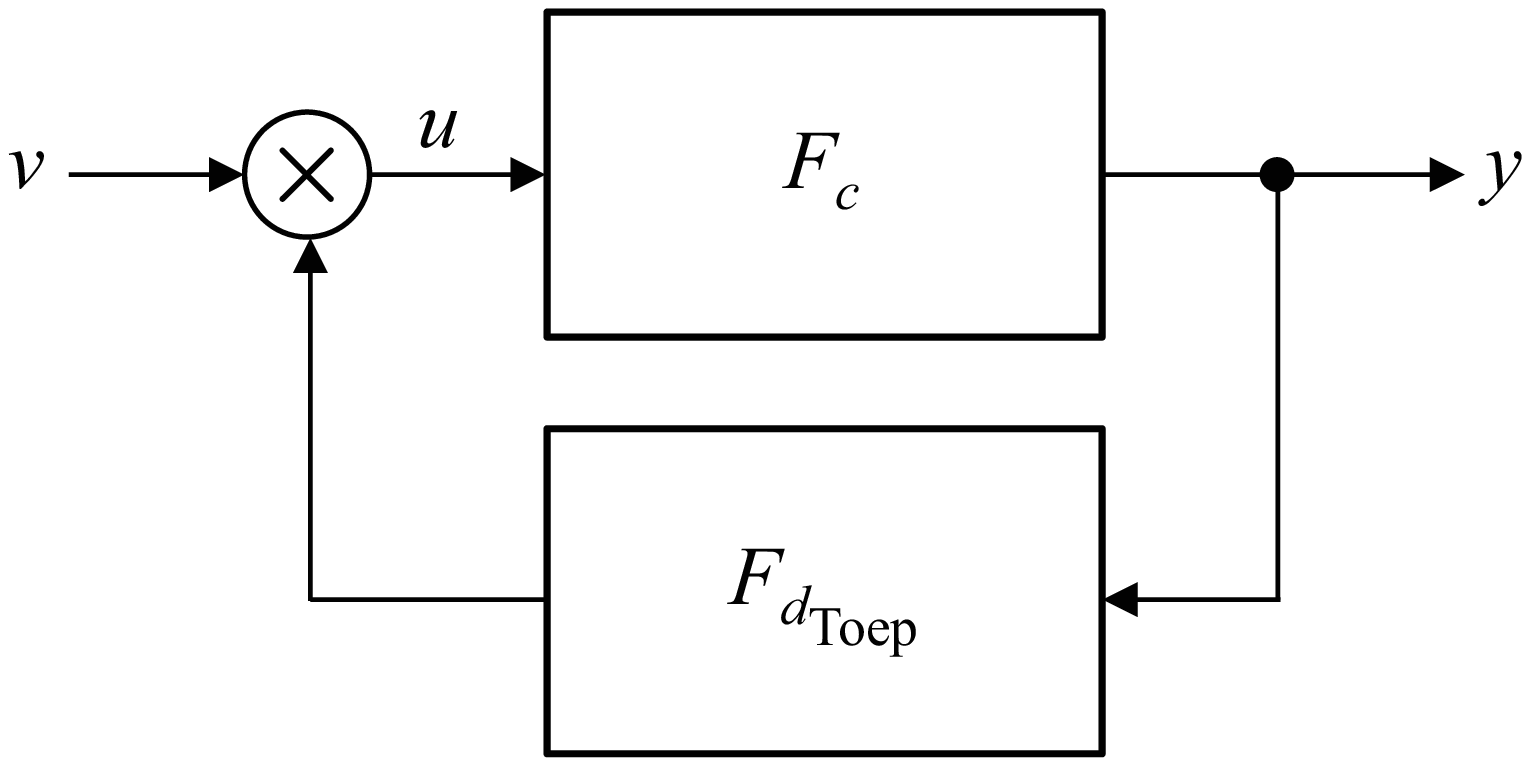}
\end{center}
\caption{Multiplicative feedback system}
\label{fig:multiplicative-feedback}
\end{figure}

Consider the (componentwise) multiplicative feedback interconnection shown in Figure~\ref{fig:multiplicative-feedback} consisting of a Fliess operator $F_c$ in the forward path, where $c\in\allseriesm$, and an $m\times m$ matrix-valued Toeplitz Fliess operator $F_{d_{\rm{Toep}}}$ in the feedback path.  It is useful here to define a {\em generalized unital series}, $\delta_i$, so that $F_{\delta_i}[y]:=y_i$ for all $i=1,\ldots,m$ and
$F_{\delta_i^{\shuffle j}}[y]:=(F_{\delta_i}[y])^j=y_i^j$. With $d=(\delta_1,\delta_1^{\shuffle 2},\ldots,\delta_1^{\shuffle m-1})$, the closed-loop system shown in
Figure~\ref{fig:multiplicative-feedback} is described by
\begdi
	y=F_c[u],\quad u=F_{d_{\rm{Toep}}}[y]v=v+\sum_{i=1}^{m-1} y_1^iN^i v,
\enddi
and, in particular,
\begeq
	u_1=v_1+y_1 v_2 + y_1^2 v_3+\cdots +y_1^{m-1}v_m. \label{eq:polynomial-feedback}
\endeq

\begex
Suppose $c_{F,m}=[c_F,0,\ldots,0]\in\allseriesm$,
where $c_F=\sum_{k=0}^\infty k!\,x_1^k$ as in Example~\ref{ex:Ferfera-recursion}.
In which case, $y_1=F_{c_F}[u]$ is realized by the one dimensional
state space model
\begdi
	\dot{z}=z^2u_1, \quad z(0)=1,\quad y_1=z.
\enddi
Applying the feedback \rref{eq:polynomial-feedback} gives the following
realization for the closed-loop system
\begdi
	\dot{z}=\sum_{i=1}^m v_i z^{i+1},\quad  z(0)=1,\;\;y_1=z, \quad i=1,\ldots,m.
\enddi
Hence, the generating series for the closed-loop system, denoted here by $c_{F,m}@d_{\rm{Toep}}$, has
the property that
\begeq \label{eq:c_Am-from-feedback}
	c_{A,m}=[c_{F,m}@d_{\rm{Toep}}]_1.
\endeq
This is a generalization of the result given in
\cite{Ebrahimi-Fard-Gray_IMRN17} for the $m=2$ case.
\endex

In control theory, feedback is often described algebraically in terms of transformation groups. This approach is useful here
as it will lead to an explicit way to compute the generating series of any closed-loop system as shown in Figure~\ref{fig:multiplicative-feedback}.
Consider the group of
Toeplitz affine Fliess operators
\begdi
	{\mathcal T}:=\left\{y=F_{d_\delta}[u]
			 =F_{d_{\rm{Toep}}}[u]u : d\in\re^{m-1}\langle\langle X\rangle\rangle\right\}
\enddi
under the operator composition
\begdi
	(F_{c_\delta}\circ F_{d_\delta})[u]
	=F_{c_{\rm{Toep}}}[F_{d_{\rm{Toep}}}[u]u]F_{d_{\rm{Toep}}}[u]u,
\enddi
which is associative and has the identity element $F_{0_\delta}$. Strictly speaking, one should limit the definition of the group to those generating series whose corresponding Fliess operators converge. But the algebraic set up presented here carries through in general if one considers the non-convergent case in a formal sense (see \cite{Gray-Wang_MTNS08}). The group inverse has already been described for the case where $d$ is differentially generated, i.e., by equation \rref{eq:inverse-affine-FO}. It can be shown by other arguments to exist in general (via contractive maps on ultrametric spaces, see \cite{Gray-Ebrahimi-Fard_SIAMzz}). The group product on ${\mathcal T}$ in turn induces a formal power series product on $\re^{m-1}\langle\langle X\rangle\rangle$ denoted by $c_\delta\circ d_\delta$ satisfying
$
	F_{c_\delta\circ d_\delta}=F_{c_\delta}\circ F_{d_\delta}.
$
Given that generating series are unique and the bijection between $\re^{m-1}\langle\langle X\rangle\rangle$ and their associated Toeplitz matrices, this means that $\re^{m-1}\langle\langle X\rangle\rangle$ inherits a group structure. A right action of the group $\mathcal T$ on the set of all Fliess operators $F_c$, $c\in\allseriesm$ is given by
$$
	(F_c\circ F_{d_\delta})[u]=F_{c}[F_{d_{\rm{Toep}}}[u]u]
					 =F_c\left[u+\sum_{i=1}^{m-1} F_{d_i}[u]N^iu\right].
$$
This composition induces a second formal power series product, the {\em mixed composition product} $c\modcomp d_\delta$, satisfying
\begin{equation} \label{eq:modcomp-product}
	F_c\circ F_{d_\delta}=F_{c\modcomp d_{\delta}}.
\end{equation}
It can be viewed as a right action of the group $\re^{m-1}\langle\langle X\rangle\rangle$ on the set $\allseriesm$. This product is left linear, nonassociative, and can be computed explicitly when $c\in\allseries$ by
\begin{displaymath}
	c \modcomp d_\delta
	=\phi_d(c)(\mathbf{1})
	=\sum_{\eta\in X^\ast} \langle c,\eta\rangle \, \phi_d(\eta)(\mathbf{1}),
\end{displaymath}
where $\mathbf{1}:=1\mathrm{e}$, and $\phi_d$ is the continuous (in the ultrametric sense) algebra homomorphism from $\allseries$ to $\Endallseries$ uniquely specified by $\phi_d(x_i\eta)=\phi_d(x_i)\circ \phi_d(\eta)$ with
\begeq \label{eq:phi-definition}
	\phi_d(x_i)(e)=x_ie+\sum_{j=1}^{m-i} x_{i+j}(d_j\shuffle e),\quad i=1,\ldots,m
\endeq
for any $e\in\allseries$, and where $\phi_d(\emptyset)$ denotes the identity map on $\allseries$. For any  $c\in\re^{i\times j}\langle\langle X\rangle\rangle$ the product is extended componentwise
such that
\begeq \label{eq:modcomp-product2}
	[c\modcomp d]_{kl}=[c]_{kl}\modcomp d
\endeq
for all $k=1,2,\ldots,i$ and $l=1,2,\ldots,j$. The following pre-Lie product results from the right linearization of the mixed composition product
\begdi
\label{pre-Lie}
	x_i\eta\ \triangleleft\ d := x_i(\eta\ \triangleleft\ d) + \sum_{j=1}^{m-i}x_{i+j}(d_j \shuffle \eta)
\enddi
with $\emptyset\triangleleft d:=0$. In which case, $c \modcomp d_\delta = c + c \triangleleft d + O(d^2)$. In particular, it can be shown directly that
\begdi
	(c_{\delta}\circ d_\delta)_{\rm{Toep}}= (c_{\rm{Toep}}\modcomp d_\delta)\shuffle d_{\rm{Toep}},
\enddi
where the shuffle product on matrix-valued series is defined componentwise. Another useful composition product is the (unmixed) composition product $c\circ d$ induced simply by $F_{c\circ d}=F_c\circ F_d$.

With these various formal power series products defined, it is now possible to give a general formula for the {\em feedback product} $c@d_{\rm{Toep}}$ describing the generating series for the interconnected system in Figure~\ref{fig:multiplicative-feedback}. The following lemma is needed.

\begin{lemma} \label{le:MIMO-shuffle-group}
The set $\mathcal{G}_{\shuffle}:=\{c\in \allseriesmtimesm: \langle c,\mathrm{e}\rangle\in Gl_m(\re)\}$ is a group under the
shuffle product with the identity element being the constant series $\mathbf{I}:=I\mathrm{e}$, and the inverse of any
$c\in \mathcal{G}_{\shuffle}$ is
\begdi
	c^{\shuffle -1}	=(\langle c,\mathrm{e}\rangle(\mathbf{I}-c^\prime))^{\shuffle -1}
				=(c^\prime)^{\shuffle \ast}\langle c,\mathrm{e}\rangle^{-1},
\enddi
where $c^\prime$ is proper (i.e, $\langle c^\prime,\mathrm{e}\rangle=0$), and $(c^\prime)^{\shuffle\ast}:=\sum_{k\geq 0}
(c^\prime)^{\shuffle k}$.
\end{lemma}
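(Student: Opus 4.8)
The plan is to verify the group axioms directly, using the standard fact that the matrix-valued shuffle algebra $\allseriesmtimesm$ is a commutative-in-the-coefficient-ring but noncommutative-in-the-matrix-structure associative algebra, so that it behaves exactly like the algebra of $m\times m$ matrices over the commutative ring $\allseries$ (with shuffle as the scalar multiplication). First I would observe that $\mathcal{G}_{\shuffle}$ is closed under $\shuffle$: if $\langle c,\mathrm{e}\rangle$ and $\langle d,\mathrm{e}\rangle$ both lie in $Gl_m(\re)$, then since $\langle c\shuffle d,\mathrm{e}\rangle=\langle c,\mathrm{e}\rangle\langle d,\mathrm{e}\rangle$ (the empty word appears in a shuffle of two words only when both are empty), the product has an invertible constant term. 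Associativity and the fact that $\mathbf{I}=I\mathrm{e}$ is a two-sided identity are inherited from the ambient algebra. So the only real content is the inverse formula.

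Next I would prove the inverse formula. Write $c=\langle c,\mathrm{e}\rangle\,\mathbf{I}\shuffle(\mathbf{I}-c')$ where $c':=\mathbf{I}-\langle c,\mathrm{e}\rangle^{-1}\shuffle c$ is proper, i.e.\ $\langle c',\mathrm{e}\rangle=0$; here I am using that the constant series $\langle c,\mathrm{e}\rangle\,\mathbf{I}$ is central for the shuffle product and commutes with everything, being scalar-matrix-valued. Because $c'$ is proper, the shuffle-geometric series $(c')^{\shuffle\ast}:=\sum_{k\ge 0}(c')^{\shuffle k}$ converges in the usual ultrametric sense on formal power series: only finitely many terms contribute to the coefficient of each word, since $\langle(c')^{\shuffle k},\eta\rangle=0$ whenever $k>\abs{\eta}$. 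A direct telescoping computation gives $(\mathbf{I}-c')\shuffle(c')^{\shuffle\ast}=(c')^{\shuffle\ast}\shuffle(\mathbf{I}-c')=\mathbf{I}$, so $(\mathbf{I}-c')$ is shuffle-invertible with inverse $(c')^{\shuffle\ast}$, and therefore
\begdi
	c^{\shuffle-1}=(\mathbf{I}-c')^{\shuffle-1}\shuffle(\langle c,\mathrm{e}\rangle\,\mathbf{I})^{-1}
			=(c')^{\shuffle\ast}\shuffle\langle c,\mathrm{e}\rangle^{-1}\,\mathbf{I},
\enddi
which is exactly the claimed expression once one suppresses the explicit $\mathbf{I}$'s as in the statement. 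One should also check this is a genuine two-sided inverse inside $\mathcal{G}_{\shuffle}$, i.e.\ that $\langle c^{\shuffle-1},\mathrm{e}\rangle=\langle c,\mathrm{e}\rangle^{-1}\in Gl_m(\re)$, which is immediate.

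The main subtlety—really the only place one must be slightly careful—is the noncommutativity of matrix multiplication: when pulling the scalar factor $\langle c,\mathrm{e}\rangle$ out of $c$, one must decide whether it sits on the left or the right, and the two choices give the two displayed forms of $c^{\shuffle-1}$; reconciling them amounts to noting that $c'$ built from a left factorization versus a right factorization differ, but both yield valid inverses, and uniqueness of inverses in a monoid forces them to coincide. I expect no genuine obstacle here: everything reduces to the classical scalar shuffle-inverse lemma (see \cite{reutenauer}) applied entrywise-compatibly with the matrix structure, plus the convergence of the proper-part geometric series. The one bookkeeping step worth doing explicitly is verifying that $\langle\,\cdot\,,\mathrm{e}\rangle$ is a ring homomorphism from $(\allseriesmtimesm,\shuffle)$ to $(\re^{m\times m},\cdot)$, which makes the closure and the identification of the constant term of the inverse transparent.
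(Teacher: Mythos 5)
The paper states this lemma without proof, so there is no internal argument to compare against; judged on its own, your proof is correct and is the natural one: view \allseriesmtimesm as $m\times m$ matrices over the commutative shuffle ring, observe that $\langle\,\cdot\,,\mathrm{e}\rangle$ is multiplicative so the set is closed and the constant term of the inverse is forced, factor $c=\langle c,\mathrm{e}\rangle(\mathbf{I}-c^\prime)$ with $c^\prime$ proper, and invert $\mathbf{I}-c^\prime$ by the locally finite geometric series $(c^\prime)^{\shuffle\ast}$, the local finiteness coming from $\langle (c^\prime)^{\shuffle k},\eta\rangle=0$ for $k>\abs{\eta}$. One side remark is false, though harmlessly so: the constant series $\langle c,\mathrm{e}\rangle\mathbf{I}$ is \emph{not} central in the matrix shuffle algebra unless $\langle c,\mathrm{e}\rangle$ is a scalar multiple of $I$; left shuffle multiplication by it is left matrix multiplication by $\langle c,\mathrm{e}\rangle$, which need not commute with the other matrix factors. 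Fortunately your argument never uses centrality: the factorization $c=\langle c,\mathrm{e}\rangle\mathbf{I}\shuffle(\mathbf{I}-c^\prime)$ with $c^\prime=\mathbf{I}-\langle c,\mathrm{e}\rangle^{-1}c$ holds by direct computation, and in the last step you reverse the two factors correctly, placing $\langle c,\mathrm{e}\rangle^{-1}$ on the right, so $c^{\shuffle-1}=(c^\prime)^{\shuffle\ast}\langle c,\mathrm{e}\rangle^{-1}$ is indeed a two-sided inverse; simply delete or reword that sentence. Likewise, the closing comment about the ``two displayed forms'' arising from left versus right factorizations is slightly off target, since both expressions in the lemma come from the same left factorization (the second merely evaluates the first), but this does not affect the validity of the proof.
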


\begin{theorem} \label{th:c-at-d-general-formula}
For any $c\in\allseriesm$ and $d\in\allseriesmminusone$ it follows that $c@d_{\rm{Toep}}=c\modcomp((d_{\rm{Toep}}\circ c)^{\shuffle -1})_\delta^{-1}$.
\end{theorem}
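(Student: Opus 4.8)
The plan is to start from the two defining relations of the interconnection in Figure~\ref{fig:multiplicative-feedback}, eliminate the internal signal $y$, recognize the resulting relation between $v$ and $u$ as a Toeplitz affine Fliess operator, and then invoke the group structure of ${\mathcal T}$ together with the mixed composition product.

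First I would write the closed loop as $y=F_c[u]$ and $u=F_{d_{\rm{Toep}}}[y]\,v$, and substitute the first relation into the second to obtain $u=F_{d_{\rm{Toep}}}[F_c[u]]\,v=F_{d_{\rm{Toep}}\circ c}[u]\,v$, using the (unmixed) composition product $F_{d_{\rm{Toep}}\circ c}=F_{d_{\rm{Toep}}}\circ F_c$ extended componentwise to the matrix-valued $d_{\rm{Toep}}$. The decisive structural point is that $d_{\rm{Toep}}\circ c$ is again an upper triangular Toeplitz matrix series: since $d_{\rm{Toep}}=I+\sum_{i=1}^{m-1}d_iN^i$ and composition acts entrywise with $\mathrm{e}\circ c=\mathrm{e}$, one has $d_{\rm{Toep}}\circ c=I+\sum_{i=1}^{m-1}(d_i\circ c)N^i$, whose empty-word coefficient is upper unitriangular and hence lies in $Gl_m(\re)$. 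Thus Lemma~\ref{le:MIMO-shuffle-group} applies and $e:=(d_{\rm{Toep}}\circ c)^{\shuffle-1}$ is a well-defined element of ${\mathcal G}_{\shuffle}$; expanding $(I+\sum_{i\geq 1}(d_i\circ c)N^i)^{\shuffle-1}=\sum_{k\geq 0}(-1)^k(\sum_{i\geq 1}(d_i\circ c)N^i)^{\shuffle k}$ and using $N^m=0$ shows the sum is finite and that $e$ is itself an upper triangular Toeplitz matrix series, hence encoded by data in $\re^{m-1}\langle\langle X\rangle\rangle$. Since matrix multiplication of Fliess operators corresponds to the componentwise shuffle product of their generating series, the identity $e\shuffle(d_{\rm{Toep}}\circ c)=\mathbf{I}$ shows that $F_{d_{\rm{Toep}}\circ c}[u]$ is pointwise invertible with inverse $F_e[u]$.

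Next I would solve $u=F_{d_{\rm{Toep}}\circ c}[u]\,v$ for $v$ by multiplying on the left by $F_e[u]$, giving $v=F_e[u]\,u=F_{e_\delta}[u]$, which is precisely a Toeplitz affine Fliess operator in the sense of Section~\ref{sect:multiAbel}. Therefore $F_{e_\delta}\in{\mathcal T}$, and since ${\mathcal T}$ is a group, with the group inverse existing in general by the ultrametric contraction argument recalled just after its definition, one gets $u=F_{e_\delta^{-1}}[v]$. Substituting back and using the defining property \rref{eq:modcomp-product} of the mixed composition product yields $y=F_c[u]=F_c\bigl[F_{e_\delta^{-1}}[v]\bigr]=F_{c\modcomp e_\delta^{-1}}[v]$. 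By uniqueness of generating series this identifies the closed-loop series as $c@d_{\rm{Toep}}=c\modcomp e_\delta^{-1}=c\modcomp\bigl((d_{\rm{Toep}}\circ c)^{\shuffle-1}\bigr)_\delta^{-1}$, as asserted.

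The main obstacle is bookkeeping rather than any single deep step: one must verify carefully that $d_{\rm{Toep}}\circ c$ and its shuffle inverse remain inside the class of upper triangular Toeplitz matrix series, so that the ``$\delta$'' decoration and the group ${\mathcal T}$ are genuinely applicable, and that matrix inversion of the Fliess operator $F_{d_{\rm{Toep}}\circ c}[u]$ coincides with the shuffle inverse of its generating series supplied by Lemma~\ref{le:MIMO-shuffle-group} rather than with some merely formal inverse. One should also note that well-posedness of the closed-loop equation $u=F_{d_{\rm{Toep}}\circ c}[u]\,v$, equivalently the existence of the feedback product $c@d_{\rm{Toep}}$, is part of what is being taken for granted here: it follows, exactly as for the definition of ${\mathcal T}$ and for the $m=2$ feedback product in \cite{Ebrahimi-Fard-Gray_IMRN17}, either from convergence estimates for Fliess operators or from a contraction argument on the ultrametric space of formal power series.
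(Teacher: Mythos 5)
Your proposal is correct and follows essentially the same route as the paper's proof: eliminate $y$ to get $u=F_{d_{\rm{Toep}}\circ c}[u]\,v$, apply Lemma~\ref{le:MIMO-shuffle-group} to solve for $v$ as a Toeplitz affine operator, invert in the group ${\mathcal T}$, and conclude via the mixed composition product \rref{eq:modcomp-product} and uniqueness of generating series. The extra checks you flag (Toeplitz structure preserved under $\circ c$ and shuffle inversion, formal well-posedness) are sensible elaborations of steps the paper treats implicitly, not a different argument.
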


\begpr
The feedback law requires that $u=F_{d_{\rm{Toep}}}[y]v=F_{d_{\rm{Toep}}}[F_c[u]]v=F_{d_{\rm{Toep}}\circ c}[u]v$.
From Lemma~\ref{le:MIMO-shuffle-group} it follows that
\begdi
	v=F_{(d_{\rm{Toep}}\circ c)^{\shuffle -1}}[u]u=F_{((d_{\rm{Toep}}\circ c)^{\shuffle -1})_\delta}[u].
\enddi
As the latter is now a group element in ${\mathcal T}$, one can write
\begdi
	u=F_{((d_{\rm{Toep}}\circ c)^{\shuffle -1})_\delta^{-1}}[v].
\enddi
Making this substitution for $u$ into $y=F_c[u]$ and writing the result in terms of the group action gives
\begdi
	y=F_{c@d_{\rm{Toep}}}[v]
	  =F_c[F_{((d_{\rm{Toep}}\circ c)^{\shuffle -1})_\delta^{-1}}[v]]
 	  =F_{c\modcomp((d_{\rm{Toep}}\circ c)^{\shuffle -1})_\delta^{-1}}[v].
\enddi
As generating series are known to be unique, the theorem is proved.
\endpr

\begin{corollary} \label{co:feedback-fixed-point}
The feedback product satisfies the fixed point equation $c@d_{\rm{Toep}}=c\modcomp(d_{\rm{Toep}}\circ (c@d_{\rm{Toep}}))_\delta$.
\end{corollary}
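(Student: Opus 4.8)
The plan is to re-run the closed-loop analysis behind Theorem~\ref{th:c-at-d-general-formula}, but instead of solving the loop equations for $u$ and substituting, I would feed the \emph{already determined} closed-loop map back into the loop and read off a fixed-point identity. Set $e := c@d_{\rm{Toep}}$, so that by definition the input-output behavior of the interconnection in Figure~\ref{fig:multiplicative-feedback} is $y = F_e[v]$; recall that this behavior is equivalently characterized by the loop equations $y = F_c[u]$ and $u = F_{d_{\rm{Toep}}}[y]\,v$.

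First I would substitute $y = F_e[v]$ into the feedback-path equation. Using the componentwise-extended (unmixed) composition product, this gives $u = F_{d_{\rm{Toep}}}\big[F_e[v]\big]\,v = F_{d_{\rm{Toep}}\circ e}[v]\,v$. Since $d_{\rm{Toep}} = I + \sum_{i=1}^{m-1} d_i N^i$, the constant series $\mathbf{I}$ satisfies $\mathbf{I}\circ e = \mathbf{I}$, and each $N^i$ is a constant matrix, the series $d_{\rm{Toep}}\circ e = \mathbf{I} + \sum_{i=1}^{m-1}(d_i\circ e)N^i$ is again an upper-triangular Toeplitz matrix-valued series with $\mathbf{1}$ on the main diagonal; hence it equals $b_{\rm{Toep}}$ for $b := (d_1\circ e,\ldots,d_{m-1}\circ e)\in\allseriesmminusone$. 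Consequently $(d_{\rm{Toep}}\circ e)_\delta$ is a well-defined element of the group $\mathcal{T}$ and $F_{(d_{\rm{Toep}}\circ e)_\delta}[v] = F_{d_{\rm{Toep}}\circ e}[v]\,v$, so that $u = F_{(d_{\rm{Toep}}\circ e)_\delta}[v]$.

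Next I would substitute this expression for $u$ into the forward-path equation $y = F_c[u]$ and invoke the defining property \eqref{eq:modcomp-product} of the mixed composition product: $y = F_c\big[F_{(d_{\rm{Toep}}\circ e)_\delta}[v]\big] = \big(F_c\circ F_{(d_{\rm{Toep}}\circ e)_\delta}\big)[v] = F_{c\modcomp(d_{\rm{Toep}}\circ e)_\delta}[v]$. Comparing this with $y = F_e[v]$ and using that generating series are unique yields $e = c\modcomp(d_{\rm{Toep}}\circ e)_\delta$, which is precisely the asserted identity $c@d_{\rm{Toep}} = c\modcomp\big(d_{\rm{Toep}}\circ(c@d_{\rm{Toep}})\big)_\delta$.

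The one step that needs genuine checking, and which I would regard as the \emph{main obstacle} (modest as it is), is the claim that $d_{\rm{Toep}}\circ e$ remains upper-triangular Toeplitz with unit diagonal, since this is exactly what legitimizes the subscript-$\delta$ operation and the membership in $\mathcal{T}$; it follows by distributing $\circ$ over the matrix entries together with $\mathbf{I}\circ e = \mathbf{I}$. Everything else is routine bookkeeping among the concatenation, composition, and mixed-composition products, and, as noted just before Lemma~\ref{le:MIMO-shuffle-group}, possible convergence issues are bypassed by working with the purely formal versions of the operators. Should a self-contained algebraic argument be preferred instead, the same identity can be extracted from the explicit formula of Theorem~\ref{th:c-at-d-general-formula} by verifying that $\big((d_{\rm{Toep}}\circ c)^{\shuffle -1}\big)_\delta^{-1}$ and $\big(d_{\rm{Toep}}\circ(c@d_{\rm{Toep}})\big)_\delta$ coincide in $\mathcal{T}$, but that route forces one to untangle the shuffle inverse and is less transparent than the operator-level derivation above.
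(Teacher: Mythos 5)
Your argument is correct and is essentially the paper's own proof: substitute $y=F_{c@d_{\rm{Toep}}}[v]$ into the loop equations, rewrite the feedback path via the composition product as $F_{d_{\rm{Toep}}\circ(c@d_{\rm{Toep}})}[v]v$, apply the mixed composition product on the forward path, and conclude by uniqueness of generating series. Your extra remark that $d_{\rm{Toep}}\circ(c@d_{\rm{Toep}})$ stays unit-diagonal Toeplitz (so the $\delta$-subscript is legitimate) is a small bookkeeping point the paper leaves implicit, not a different route.
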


\begpr
Observe that
$
	y=F_c[F_{d_{\rm{Toep}}}[y]v]
$. So if $y=F_{c@d_{\rm{Toep}}}[v]$ then necessarily
\begdi
	y=F_c[F_{d_{\rm{Toep}}}[F_{c@d_{\rm{Toep}}}[v]]v]
	 =F_c[F_{d_{\rm{Toep}}\circ (c@d_{\rm{Toep}})}[v]v]
	 =F_{c\modcomp(d_{\rm{Toep}}\circ(c@d_{\rm{Toep}}))_\delta}[v].
\enddi
The uniqueness of generating series then proves the claim.
\endpr

The tools above are now applied to compute the feedback product $c_{F,m}@d_{\rm{Toep}}$ in \rref{eq:c_Am-from-feedback}.
This will in turn render identities satisfied by the Abel series. The following lemma is useful.

\begin{lemma} \label{lem:inverse-rational-Toeplitz-matrix}
If in (\ref{eq:Toeplitz-matrix}) $h_i=h^i$, $i=1,\ldots,m-1$ for some $h\in C^\omega$ then $H^{-1}=I-hN$.
\end{lemma}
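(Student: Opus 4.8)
The plan is to recognize the matrix $H$ as a \emph{truncated geometric series} in the single matrix $hN$ and then invert it by the usual telescoping identity. Concretely, substituting $h_i=h^i$ into the expression $H=I+\sum_{i=1}^{m-1}h_iN^i$ from \rref{eq:Toeplitz-matrix} and using $N^0=I$ gives
\[
	H=\sum_{i=0}^{m-1}h^iN^i=\sum_{i=0}^{m-1}(hN)^i,
\]
where the rewriting $h^iN^i=(hN)^i$ is legitimate because $h$ is scalar-valued and hence commutes (pointwise on $W$) with the constant matrix $N$.

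Next I would exploit the nilpotency of $N$. Since $N^m=0$, it follows that $(hN)^m=h^mN^m=0$, so the standard finite geometric series identity applies: multiplying on either side by $I-hN$ telescopes,
\[
	(I-hN)\sum_{i=0}^{m-1}(hN)^i=\sum_{i=0}^{m-1}(hN)^i-\sum_{i=1}^{m}(hN)^i=I-(hN)^m=I,
\]
and likewise $\bigl(\sum_{i=0}^{m-1}(hN)^i\bigr)(I-hN)=I$. Combining this with the previous display yields $H(I-hN)=(I-hN)H=I$, hence $H^{-1}=I-hN$, as claimed. (One can also observe consistency with Theorem~\ref{th:Toeplitz-inverse-realization}: here $\tilde h_1=-h$ and $\tilde h_j=0$ for $j\ge 2$, matching the fact that only the super-diagonal of $H^{-1}$ is nonzero.)

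I do not anticipate any real obstacle here; the statement is essentially the observation that an upper-triangular Toeplitz matrix with entries along the $i$-th superdiagonal equal to the $i$-th power of a fixed scalar function is the inverse of $I-hN$. The only point worth flagging explicitly is the commutativity of the scalar $h$ with $N$, which is what allows the collapse to a one-parameter geometric series; everything else is the finite-geometric-series telescoping used already in the proof of Theorem~\ref{th:Toeplitz-inverse-realization}.
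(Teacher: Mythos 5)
Your proof is correct and follows essentially the same route as the paper: both recognize $H$ as the truncated geometric series $\sum_{i=0}^{m-1}(hN)^i$ and exploit $N^m=0$, the only cosmetic difference being that you verify $H(I-hN)=I$ by direct telescoping while the paper rewrites $H=(I-(hN)^m)(I-hN)^{-1}$ and inverts. No gaps.
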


\begpr
Given that
$
	H=I+h N+h^2N^2+\cdots +h^{m-1}N^{m-1},
$
observe
\begin{align*}
	H^{-1}
	&=((I-hN)^{-1}-(hN)^m(I-hN)^{-1})^{-1} \\
	&=(I-hN)(I-(hN)^m)^{-1} \\
	&=I-hN+O((hN)^m) \\
	&=I-hN,
\end{align*}
since $N^n=0$ when $n\geq m$.
\endpr

\begin{theorem} \label{th:cAm-feedback-formula}
For any $m\geq 2$, $c_{A,m}=c_F\modcomp(I-c_FN)^{-1}_\delta$.
\end{theorem}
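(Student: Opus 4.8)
The plan is to specialize Theorem~\ref{th:c-at-d-general-formula} to the data $c=c_{F,m}=[c_F,0,\ldots,0]$ and $d=(\delta_1,\delta_1^{\shuffle 2},\ldots,\delta_1^{\shuffle m-1})$, the multiplicative feedback series, and then simplify the resulting expression using the structural lemmas. By \rref{eq:c_Am-from-feedback} we have $c_{A,m}=[c_{F,m}@d_{\rm{Toep}}]_1$, and Theorem~\ref{th:c-at-d-general-formula} gives
\begdi
	c_{F,m}@d_{\rm{Toep}}=c_{F,m}\modcomp\bigl((d_{\rm{Toep}}\circ c_{F,m})^{\shuffle -1}\bigr)_\delta^{-1}.
\enddi
So the whole argument reduces to identifying the group element $\bigl((d_{\rm{Toep}}\circ c_{F,m})^{\shuffle -1}\bigr)_\delta^{-1}$ in $\mathcal T$ explicitly enough to recognize its action on $c_F$ (in the first component) as $(I-c_FN)^{-1}_\delta$.

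First I would compute $d_{\rm{Toep}}\circ c_{F,m}$. Since $d=(\delta_1,\delta_1^{\shuffle 2},\ldots,\delta_1^{\shuffle m-1})$, the Toeplitz matrix is $d_{\rm{Toep}}=I+\sum_{i=1}^{m-1}\delta_1^{\shuffle i}N^i$, and the unmixed composition by $c_{F,m}$ replaces $y_1$ by $F_{c_F}[u]$, i.e., $\delta_1^{\shuffle i}\circ c_{F,m}=c_F^{\shuffle i}$. Hence $d_{\rm{Toep}}\circ c_{F,m}=I+\sum_{i=1}^{m-1}c_F^{\shuffle i}N^i$, which is exactly the Toeplitz matrix $H$ of Lemma~\ref{lem:inverse-rational-Toeplitz-matrix} with the (scalar, in the shuffle algebra) entry $h=c_F$. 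By that lemma, its shuffle inverse is
\begdi
	(d_{\rm{Toep}}\circ c_{F,m})^{\shuffle -1}=I-c_FN,
\enddi
which is again Toeplitz, corresponding to the series $(-c_F,0,\ldots,0)\in\allseriesmminusone$; call this series $\tilde d$, so $\tilde d_{\rm{Toep}}=I-c_FN=(d_{\rm{Toep}}\circ c_{F,m})^{\shuffle -1}$. Then the group inverse in $\mathcal T$ is $\bigl(\tilde d_\delta\bigr)^{-1}$, and by the inverse realization \rref{eq:inverse-affine-FO} (equivalently, the formula $\tilde g_i=\sum_j g_j(\tilde H^{-1})_{ji}$, $\tilde h_j=(\tilde H^{-1})_{1,1+j}$ for $\tilde H=I-c_FN$) the inverse group element is the Toeplitz affine operator whose matrix is $\tilde H^{-1}=(I-c_FN)^{-1}$. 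That is, $\bigl((d_{\rm{Toep}}\circ c_{F,m})^{\shuffle -1}\bigr)_\delta^{-1}=\bigl((I-c_FN)^{-1}_1,\ldots,(I-c_FN)^{-1}_{m-1}\bigr)_\delta$, which is precisely what is denoted $(I-c_FN)^{-1}_\delta$ in the statement — consistent with the identification $c_{A,m}=(I-c_FN)^{-1}_1$ from \rref{eq:cAm-as-cF-inverse}.

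Finally I would take first components. Since $c_{F,m}=[c_F,0,\ldots,0]$, the componentwise extension \rref{eq:modcomp-product2} of mixed composition gives $[c_{F,m}\modcomp e_\delta]_1=c_F\modcomp e_\delta$ for any $e$, so
\begdi
	c_{A,m}=[c_{F,m}@d_{\rm{Toep}}]_1=c_F\modcomp(I-c_FN)^{-1}_\delta,
\enddi
as claimed. The main obstacle is bookkeeping rather than conceptual: one must check carefully that the shuffle inverse computed in the matrix shuffle group $\mathcal G_\shuffle$ of Lemma~\ref{le:MIMO-shuffle-group} really does coincide with $I-c_FN$ (which requires noting that $c_F^{\shuffle i}N^i$ behaves exactly like the scalar powers $h^i$ in Lemma~\ref{lem:inverse-rational-Toeplitz-matrix}, using nilpotency of $N$), and that the Toeplitz structure is preserved under each operation — the unmixed composition $d_{\rm{Toep}}\circ c_{F,m}$, the shuffle inversion, and the group inversion in $\mathcal T$ — so that the final object is genuinely of the form $e_\delta$ with $e=(I-c_FN)^{-1}$ restricted to its first $m-1$ superdiagonal entries. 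Once this compatibility of the three inversions with the Toeplitz form is in hand, the identity drops out of Theorem~\ref{th:c-at-d-general-formula} directly.
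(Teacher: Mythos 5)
Your proposal is correct and follows essentially the same route as the paper: apply Theorem~\ref{th:c-at-d-general-formula} with $c=c_{F,m}$ and $d=(\delta_1,\ldots,\delta_1^{\shuffle m-1})$, compute $d_{\rm{Toep}}\circ c_{F,m}=\sum_{i=0}^{m-1}c_F^{\shuffle i}N^i$, invert it via the shuffle version of Lemma~\ref{lem:inverse-rational-Toeplitz-matrix} to get $I-c_FN$, and extract the first component using \rref{eq:c_Am-from-feedback} and \rref{eq:modcomp-product2}. Your additional unpacking of the group inverse $(I-c_FN)^{-1}_\delta$ via the realization \rref{eq:inverse-affine-FO} is not needed (the paper simply leaves it as the group-inverse notation, consistent with \rref{eq:cAm-as-cF-inverse}) but is harmless.
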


\begpr
Starting from the formula in Theorem~\ref{th:c-at-d-general-formula}
for the feedback product with $c=c_{F,m}=[c_F,0,\ldots,0]$ and $d=(\delta_1,\delta_1^{\shuffle 2},\ldots,\delta_1^{\shuffle m-1})$
and using the definition of the shuffle inverse in Lemma~\ref{le:MIMO-shuffle-group}, observe that
\begin{align*}
	c_{F,m}@d_{\rm{Toep}}&=c_{F,m}\modcomp((d_{\rm{Toep}}\circ c_{F,m})^{\shuffle -1})_\delta^{-1} \\
	&=c_{F,m}\modcomp\left(\left(\left(I+\sum_{i=1}^{m-1} \delta_1^{\shuffle i} N^i\right)\circ c_{F,m}\right)^{\shuffle -1}\right)_\delta^{-1} \\
	&=c_{F,m}\modcomp\left(\left(\sum_{i=0}^{m-1}c_F^{\shuffle i}N^i\right)^{\shuffle -1}\right)_\delta^{-1}.
\end{align*}
Now note that if $h$ in Lemma~\ref{lem:inverse-rational-Toeplitz-matrix} is identified with $F_{c_F}$
then $h^i=F^i_{c_F}=F_{c_F^{\shuffle i}}$. So the shuffle version of the identity in this lemma is
$\left(\sum_{i=0}^{m-1}c_F^{\shuffle i}N^i\right)^{\shuffle -1}=I-c_FN$. In which case,
$
	c_{F,m}@d_{\rm{Toep}}=c_{F,m}\modcomp (I-c_FN)_\delta^{-1}.
$
Next, in light of \rref{eq:c_Am-from-feedback} and \rref{eq:modcomp-product2}, it is clear that
$c_{A,m}=[c_{F,m}\modcomp (I-c_FN)_\delta^{-1}]_1=c_F\modcomp (I-c_FN)_\delta^{-1}$ as claimed.
\endpr

\begin{example}
Consider evaluating $c_{A,m}=c_F\modcomp(I-c_FN)^{-1}_\delta$ when $m=3$. In this case
\begdi
c_F\modcomp(I-c_FN)^{-1}_\delta=\sum_{k=0}^\infty k!\; \phi_d(x_1^k)(\mathbf{1}),
\enddi
where
\begdi
\phi_{d}(x_1)(e)=x_1e+x_2(d_1\shuffle e)+x_3(d_2\shuffle e)
\enddi
with $d_1=(I-c_FN)^{-1}_1$ and $d_2=(I-c_FN)^{-1}_2$. Using \rref{antipode1time} to compute the inverses gives
\begin{align*}
	\langle d_1,\mathrm{e}\rangle&=S(a^1_\mathrm{e})(-c_{F})=-a^1_\mathrm{e}(-c_{F})=-\langle-c_F,\mathrm{e}\rangle=1 \\
	\langle d_1,x_1\rangle&=S(a^1_{x_1})(-c_{F})=-a^1_{x_1}(-c_{F})=-\langle-c_F,x_1\rangle=1 \\
	\langle d_1,x_2\rangle&=S(a^1_{x_2})(-c_{F})=(-a^1_{x_2}+a^1_{x_1}a^1_\mathrm{e})(-c_{F})\\
                 &=-\langle-c_F,x_2\rangle+\langle-c_F,x_1\rangle\langle -c_F,\mathrm{e}\rangle=1\\
	\langle d_1,x_3\rangle&=S(a^1_{x_3})(-c_{F})=(-a^1_{x_3}+a^1_{x_2}a^1_\mathrm{e}-a^1_{x_1}a^1_\mathrm{e}a^1_\mathrm{e}+a^1_{x_1}a^2_\mathrm{e})(-c_{F}) \\
                 &=-\langle -c_F,x_3\rangle+\langle -c_F,x_2\rangle\langle -c_F,\mathrm{e}\rangle-\langle-c_F,x_1\rangle\langle-c_F,\mathrm{e}\rangle^2+
                 \langle -c_F,x_1\rangle\langle 0,\mathrm{e}\rangle \\
                 &=1.
\end{align*}
Therefore, $d_1=1+x_1+x_2+x_3+\cdots$, which from \rref{eq:cAm-as-cF-inverse} should be $c_{A,3}$. Similarly, $d_2=1+2x_1+2x_2+2x_3$, so that
\begin{align*}
c_F\modcomp(I-c_FN)^{-1}_\delta&=1 + x_1 + x_2 + x_3 + 2 x_1x_1 +2 x_1x_2 + 2 x_1x_3 + 3 x_2x_1 +3 x_2x_2 \\
&\hspace*{0.18in}+3 x_2x_3 +4 x_3x_1+4 x_3x_2 + 4 x_3x_3 + \cdots,
\end{align*}
which is also equivalent to $c_{A,3}$ as expected.
\end{example}

\begin{theorem} \label{th:c_Am-shuffle-equation}
For any $m\geq 2$
\begdi
	c_{A,m}=1+c_{A,m}\shuffle\left(\sum_{i=1}^m x_i c_{A,m}^{\shuffle i-1}\right).
\enddi
\end{theorem}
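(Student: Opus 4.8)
The plan is to recast the claim as the shuffle identity $c_{A,m}\shuffle P=c_{A,m}-1$, where $P:=\sum_{i=1}^m x_i\,c_{A,m}^{\shuffle\,i-1}$, and to prove it purely inside the shuffle algebra, using as the only input the classical nonlinear recursion $c_{A,m}=1+\sum_{i=1}^m x_i\,c_{A,m}^{\shuffle\,i+1}$ recalled at the start of this section. Throughout write $c:=c_{A,m}$, so that $\langle c,\mathrm{e}\rangle=1$ and $c-1=\sum_{i=1}^m x_i\,c^{\shuffle\,i+1}$.

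The first step is an auxiliary shuffle identity: for every $\eta\in\allseries$ and every $i\in\{1,\dots,m\}$,
\[
	c\shuffle(x_i\eta)=x_i(c\shuffle\eta)+\sum_{j=1}^m x_j\bigl(c^{\shuffle\,j+1}\shuffle(x_i\eta)\bigr).
\]
I would obtain this by writing $c=1+(c-1)$, inserting $c-1=\sum_j x_j\,c^{\shuffle\,j+1}$, applying the defining shuffle recursion \rref{shuffleproduct} to each $(x_j c^{\shuffle\,j+1})\shuffle(x_i\eta)$, and noticing that the leftover term $x_i\bigl((c-1)\shuffle\eta\bigr)$ produces an $x_i\eta$ that cancels against $1\shuffle(x_i\eta)=x_i\eta$.

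Specializing $\eta=c^{\shuffle\,i-1}$ (so that $c\shuffle c^{\shuffle\,i-1}=c^{\shuffle\,i}$) and summing over $i$ yields $c\shuffle P=\sum_{i=1}^m x_i\,c^{\shuffle\,i}+\sum_{j=1}^m x_j\bigl(c^{\shuffle\,j+1}\shuffle P\bigr)$. Setting $B:=c\shuffle P-(c-1)$, subtracting the classical recursion, and collecting terms by their leading letter, the key cancellation is that
\[
	c^{\shuffle\,j+1}\shuffle P=c^{\shuffle\,j}\shuffle(c\shuffle P)=c^{\shuffle\,j}\shuffle(B+c-1),\qquad
	c^{\shuffle\,j+1}-c^{\shuffle\,j}=c^{\shuffle\,j}\shuffle(c-1),
\]
so the $c^{\shuffle\,j}\shuffle(c-1)$ contributions drop out and one is left with the self-referential equation $B=\sum_{j=1}^m x_j\bigl(c^{\shuffle\,j}\shuffle B\bigr)$.

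To finish, note that $B$ is proper, since $\langle B,\mathrm{e}\rangle=\langle c,\mathrm{e}\rangle\langle P,\mathrm{e}\rangle-\langle c-1,\mathrm{e}\rangle=0$. For any word $\eta=x_j\xi$ one then has $\langle B,\eta\rangle=\langle c^{\shuffle\,j}\shuffle B,\xi\rangle$, which is a sum of products $\langle c^{\shuffle\,j},\alpha\rangle\langle B,\beta\rangle$ with $|\beta|\le|\xi|<|\eta|$; an induction on word length forces $B=0$, i.e. $c\shuffle P=c-1$, as claimed. I expect the only real obstacle to be bookkeeping: pinning down the index ranges in the auxiliary identity and spotting the cancellation that yields $B=\sum_j x_j(c^{\shuffle\,j}\shuffle B)$ — everything after that is routine. (Alternatively one can read the identity off a realization: $c$ generates the solution $z$ of $\dot z=\sum_i v_i z^{i+1}$, equivalently, via the feedback law \rref{eq:polynomial-feedback}, of $\dot z=z^2u_1$ with $u_1=\sum_i v_i z^{i-1}$; then $\psi:=z\bigl(1-\int_0^{\cdot}u_1\bigr)-1$ satisfies $\dot\psi=z u_1\psi$ with $\psi(0)=0$, so $\psi\equiv 0$ and $z=1+z\int_0^{\cdot}u_1=1+F_{c\shuffle P}[v]$, giving the claim by uniqueness of generating series; the algebraic argument avoids the convergence hypotheses.)
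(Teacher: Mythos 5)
Your proof is correct, but it takes a genuinely different route from the paper's. The paper obtains the identity as a quick consequence of its feedback machinery: combining the fixed-point equation of Corollary~\ref{co:feedback-fixed-point} with Theorem~\ref{th:cAm-feedback-formula}, and using left distributivity of the mixed composition product over the shuffle together with the explicit action \eqref{eq:phi-definition}, it computes $c_{A,m}=\sum_{k\geq 0}\bigl(\sum_{i=1}^m x_i\,c_{A,m}^{\shuffle\,i-1}\bigr)^{\shuffle k}$, i.e.\ $c_{A,m}$ is the shuffle-star of $P$, from which the stated identity is immediate. You instead work entirely inside the shuffle algebra, taking as sole input the classical recursion $c_{A,m}=1+\sum_{i=1}^m x_i\,c_{A,m}^{\shuffle\,i+1}$ (which the paper quotes as known from the Abel equation), and your steps check out: the auxiliary identity follows from \eqref{shuffleproduct} extended bilinearly, the cancellation yielding $B=\sum_{j=1}^m x_j\bigl(c_{A,m}^{\shuffle\,j}\shuffle B\bigr)$ is a correct use of commutativity and associativity of $\shuffle$, and since $B$ is proper the coefficientwise induction on word length indeed forces $B=0$. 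What each approach buys: the paper's argument is short given its earlier results and makes the conceptual point of the section --- that the identity reflects the Toeplitz multiplicative feedback structure --- and it directly delivers the slightly stronger statement $c_{A,m}=P^{\shuffle\ast}$; your argument is self-contained and more elementary, needing neither the mixed composition product nor the group-theoretic setup (and, since $P$ is proper, your fixed-point identity recovers $c_{A,m}=P^{\shuffle\ast}$ anyway). Your parenthetical realization-based argument is also sound and is essentially the analytic shadow of the paper's feedback computation, at the cost of (formal) convergence caveats that your algebraic proof avoids.
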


\begpr
Applying Corollary~\ref{co:feedback-fixed-point}, Theorem~\ref{th:cAm-feedback-formula}, and the fact that the mixed composition product distributes to the left over the shuffle product gives
\begin{align*}
	c_{A,m}&=c_F\modcomp (d_{\rm Toep}\circ c_{A,m})_\delta
           =c_F\modcomp \left(\sum_{i=0}^m c_A^{\shuffle i}N^i\right)_\delta \\
           &=\sum_{k=0}^\infty x_1^{\shuffle k}\modcomp \left(\sum_{i=0}^m c_A^{\shuffle i}N^i\right)_\delta
		   =\sum_{k=0}^\infty \left(x_1\modcomp  \left(\sum_{i=0}^m c_A^{\shuffle i}N^i\right)_\delta \right)^{\shuffle k} \\
   	  	   &=\sum_{k=0}^\infty \left(\sum_{i=1}^m x_i c_{A,m}^{\shuffle i-1}\right)^{\shuffle k}.
\end{align*}
Hence, the identity in question then follows directly.
\endpr

Theorem~\ref{th:c_Am-shuffle-equation} was first observed in functional form for the $m=2$ case in \cite{Lijun-Yun_01} (see equation (2.3)). In fact, one of the main results of this paper (Theorem 4.1) is actually just a graded version of this result as described next.

\begin{corollary}
For any $m,n\geq 2$
\begdi
	c_{A,m}(n)=c_{A,m}(n-1)\shuffle x_1+\sum_{i=2}^m\sum_{k_1+\cdots+k_i=n-1}
	c_{A,m}(k_1)\shuffle (x_i(c_{A,m}(k_2)\shuffle\cdots c_{A,m}(k_i))).
\enddi
\end{corollary}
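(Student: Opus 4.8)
The plan is to obtain the identity by taking homogeneous components in Theorem~\ref{th:c_Am-shuffle-equation}. The grading on $\allseries$ fixed by $\deg(x_i)=i$ is additive for both the concatenation and the shuffle products, so the degree-$(n-1)$ part of any product of series is the sum of the products of homogeneous pieces whose degrees add up to $n-1$. Keeping in mind the indexing convention $c_{A,m}=\sum_{k\ge 1}c_{A,m}(k)$ with $\deg(c_{A,m}(k))=k-1$ — the sole place an off-by-one enters — I would extract the degree-$(n-1)$ component from both sides of the equation $c_{A,m}=1+c_{A,m}\shuffle\bigl(\sum_{i=1}^m x_ic_{A,m}^{\shuffle i-1}\bigr)$.

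First I would expand the right-hand side into homogeneous pieces. Writing $c_{A,m}=\sum_{k_1\ge 1}c_{A,m}(k_1)$ and, for $2\le i\le m$, $c_{A,m}^{\shuffle i-1}=\sum_{k_2,\ldots,k_i\ge 1}c_{A,m}(k_2)\shuffle\cdots\shuffle c_{A,m}(k_i)$, the generic summand of the $i$-th term of $c_{A,m}\shuffle(\sum_{i} x_ic_{A,m}^{\shuffle i-1})$ is $c_{A,m}(k_1)\shuffle\bigl(x_i(c_{A,m}(k_2)\shuffle\cdots\shuffle c_{A,m}(k_i))\bigr)$. Its degree is $\deg(x_i)$ plus the degree of the inner shuffle of $i-1$ homogeneous pieces plus $\deg(c_{A,m}(k_1))$, that is $i+\sum_{j=2}^i(k_j-1)+(k_1-1)=\sum_{j=1}^i k_j$. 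The term $i=1$ must be treated separately: there $c_{A,m}^{\shuffle 0}$ is the shuffle unit $\mathbf{1}$, so this summand is simply $c_{A,m}\shuffle x_1=\sum_{k_1\ge 1}c_{A,m}(k_1)\shuffle x_1$, whose $k_1$-th piece has degree $k_1$.

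Now I would collect the degree-$(n-1)$ contributions. On the left this is $c_{A,m}(n)$ by definition. On the right, the constant $1$ has degree $0$ and so contributes only for $n=1$; since $n\ge 2$ it disappears. The $i=1$ summand contributes exactly the terms with $k_1=n-1$, namely $c_{A,m}(n-1)\shuffle x_1$. For each $i$ with $2\le i\le m$, the $i$-th summand contributes exactly the terms indexed by compositions $k_1+\cdots+k_i=n-1$ into positive parts, of which there are only finitely many (and $i$ ranges over a finite set), so no convergence question arises. Summing these contributions gives precisely the asserted formula. The only real obstacle is this degree bookkeeping — keeping the shift $\deg(c_{A,m}(k))=k-1$ straight and isolating the $i=1$ term, where $c_{A,m}^{\shuffle 0}=\mathbf{1}$ rather than a sum over an index; beyond that the statement is an immediate consequence of the additivity of the grading applied to Theorem~\ref{th:c_Am-shuffle-equation}.
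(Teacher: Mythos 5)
Your proof is correct and matches the paper's intended argument: the corollary is presented as the graded version of Theorem~\ref{th:c_Am-shuffle-equation}, obtained exactly as you do by extracting the degree-$(n-1)$ homogeneous component (with $\deg(c_{A,m}(k))=k-1$, $\deg(x_i)=i$) from both sides of the shuffle identity. Your bookkeeping of the degree shift and the separate treatment of the $i=1$ term are exactly the points the paper leaves implicit.
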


\begin{example}
When $m=3$ observe
$
	c_{A,3}=1+c_{A,3}\shuffle(x_1+x_2c_{A,3}+x_3c_{A,3}^{\shuffle 2}).
$
Therefore, if $a:=F_{c_{A,3}}$ then
\begdi
	a(t)=1+a(t)\left[\int_0^t u_1(\tau)\,d\tau+\int_0^t u_2(\tau)a(\tau)\,d\tau+\int_0^t u_3(\tau)a^2(\tau)\,d\tau\right].
\enddi
Defining $a_n=F_{c_{A,3}(n)}$, $n\geq 1$ gives the recursion
\begin{align*}
	a_n(t)&=a_{n-1}(t)\int_0^t u_1(\tau)\,d\tau+\sum_{k_1+k_2=n-1}
			a_{k_1}(t)\int_0^t u_2(\tau)a_{k_2}(\tau)\,d\tau+ \\
		&\hspace*{0.2in}\sum_{k_1+k_2+k_3=n-1}
		a_{k_1}(t)\int_0^t u_3(\tau)a_{k_2}(\tau)a_{k_3}(\tau)\,d\tau.
\end{align*}
The $m=2$ case of this recursion appears in \cite{Lijun-Yun_01} as equation (1.7).
\end{example}

The final theorem will be generalized in Section~\ref{sect:center-condition} to provide a sufficient condition for a center of the Abel equation.

\begin{theorem} \label{th:Abel-solution-r-equals-1}
Let $v_1,v_2,\ldots,v_m\in L_1[0,\omega]$ and $m\geq 2$ be fixed. Then the $m+1$ degree Abel
equation~(\ref{eq:Abel-eqn-degree-m}) with $z(0)=1$
has the solution
\begdi
	z(t)=\frac{1}{1-E_{x_1}[u](t)},
\enddi
if there exists functions $u_1,u_2,\ldots,u_m\in L_1[0,\omega]$ satisfying
\begin{align*}
	v_1(t)&=u_1(t)-\frac{u_2(t)}{1-E_{x_1}[u](t)} \\
	v_2(t)&=u_2(t)-\frac{u_3(t)}{1-E_{x_1}[u](t)} \\
		&\hspace*{0.1in}\vdots \\
	v_{m-1}(t)&=u_{m-1}(t)-\frac{u_m(t)}{1-E_{x_1}[u](t)} \\
	v_m(t)&=u_m(t),
\end{align*}
with $E_{x_1}[u](t):=\int_0^t u_1(\tau)\,d\tau<1$ on $[0,\omega]$.
\end{theorem}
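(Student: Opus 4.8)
The plan is to verify directly that the displayed $z$ solves the initial value problem, using the multiplicative feedback picture of Section~\ref{sect:feedback-recursions} as a guide: the hypothesized relations between the $v_i$ and the $u_i$ are nothing but the inversion of the feedback law $u_1=v_1+y_1v_2+\cdots+y_1^{m-1}v_m$ from \rref{eq:polynomial-feedback}, evaluated along $y_1=z$, while the forward channel $\dot z=z^2u_1$, $z(0)=1$ integrates in closed form. First I would set $E(t):=E_{x_1}[u](t)=\int_0^t u_1(\tau)\,d\tau$ and $z(t):=1/(1-E(t))$. Since $u_1\in L_1[0,\omega]$, the function $E$ is absolutely continuous on $[0,\omega]$, and the hypothesis $E(t)<1$ together with continuity of $E$ and compactness of $[0,\omega]$ forces $\sup_{[0,\omega]}E<1$; hence $1-E$ is bounded away from zero, so $z$ is well defined and absolutely continuous on $[0,\omega]$ with $z(0)=1/(1-E(0))=1$. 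Differentiating then gives $\dot z=\dot E/(1-E)^2=u_1z^2$ almost everywhere.

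The heart of the argument is to show that the $m$ feedback relations collapse to the single identity $u_1=\sum_{i=1}^m v_i z^{i-1}$. Rewriting the hypotheses via $u_{i+1}/(1-E)=u_{i+1}z$ as $v_i=u_i-u_{i+1}z$ for $i=1,\ldots,m-1$ and $v_m=u_m$, one computes
\begin{align*}
\sum_{i=1}^m v_i z^{i-1}
 &=\sum_{i=1}^{m-1}(u_i-u_{i+1}z)z^{i-1}+u_m z^{m-1}\\
 &=\sum_{i=1}^{m-1}u_i z^{i-1}-\sum_{i=1}^{m-1}u_{i+1}z^{i}+u_m z^{m-1}=u_1,
\end{align*}
the two intermediate sums telescoping against each other and against the final term. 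This is essentially the only computational content of the proof.

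Combining the two steps, $\dot z=u_1z^2=z^2\sum_{i=1}^m v_i z^{i-1}=\sum_{i=1}^m v_i z^{i+1}$ almost everywhere on $[0,\omega]$, together with $z(0)=1$, so $z$ is a solution of \rref{eq:Abel-eqn-degree-m}. Uniqueness—needed to justify calling it \emph{the} solution—is then automatic, since the right-hand side is a polynomial in $z$, hence locally Lipschitz in $z$, with $L_1$ time dependence, so a Carath\'eodory existence–uniqueness theorem applies on $[0,\omega]$. I do not expect a genuine obstacle: the only points requiring care are (i) that the hypothesis $E_{x_1}[u]<1$ on the compact interval legitimizes differentiating $z$ and working almost everywhere, and (ii) the index bookkeeping in the telescoping sum; a one-line alternative presentation is simply to observe that $\dot z=z^2u_1$, $z(0)=1$ has solution $z=1/(1-E_{x_1}[u])$ and that plugging the feedback law into this realization reproduces exactly the closed-loop system $\dot z=\sum_{i=1}^m v_iz^{i+1}$ exhibited in the example following \rref{eq:polynomial-feedback}.
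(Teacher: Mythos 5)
Your proof is correct, but it proceeds along a genuinely different route from the paper. You verify the claim directly: the telescoping identity $\sum_{i=1}^m v_i z^{i-1}=u_1$ (with $v_i=u_i-u_{i+1}z$ for $i<m$, $v_m=u_m$) combined with the elementary fact that $z=1/(1-E_{x_1}[u])$ solves $\dot z=u_1z^2$, $z(0)=1$, and your analytic care about $1-E_{x_1}[u]$ being bounded away from zero on the compact interval is sound. The paper instead argues algebraically: it rewrites Theorem~\ref{th:cAm-feedback-formula} as $c_F=c_{A,m}\modcomp(I-c_FN)_\delta$, observes that the hypothesized relations say precisely $v=F_{(I-c_FN)_\delta}[u]$, and then uses the composition identity \rref{eq:modcomp-product} to get $F_{c_{A,m}}[v]=F_{c_{A,m}\modcomp(I-c_FN)_\delta}[u]=F_{c_F}[u]=1/(1-E_{x_1}[u])$, taking for granted that $z=F_{c_{A,m}}[v]$ is the solution of the Abel equation with $z(0)=1$. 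What the paper's route buys is exactly this extra identity $z=F_{c_{A,m}}[v]$, which is what gets exploited in Section~\ref{sect:center-condition}; what your route buys is elementarity and robustness---it avoids any convergence questions about the Chen--Fliess series for general $L_1$ inputs, and it is in fact the same style of argument the paper itself uses later for Theorem~\ref{th:Abel-center} (direct substitution). Your closing remark on Carath\'eodory uniqueness is a harmless bonus not present in, nor needed by, the paper's statement.
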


\begpr
In light of Theorem~\ref{th:cAm-feedback-formula}, it is clear that
$
	c_{A,m}=c_F\modcomp (I-c_FN)^{-1}_\delta,
$
and thus,
$
	c_F=c_{A,m}\modcomp (I-c_FN)_\delta.
$
So assume there exists $u\in L^m_1[0,\omega]$ such that
$$
	v=F_{(I-c_F N)_\delta}[u]
	=\left[
	\begin{array}{c}
	u_1-F_{c_F}[u]u_2 \\
	u_2-F_{c_F}[u]u_3 \\
	\vdots \\
	u_{m-1}-F_{c_F}[u]u_m \\
	u_m
	\end{array}
	\right].
$$
Then, observing that $F_{c_F}[u]=1/(1-E_{x_1}[u])$, it follows from (\ref{eq:modcomp-product}) that
$$
	z(t)=F_{c_{A,m}}[v]
	     =F_{c_{A.m}}[F_{(I-c_F N)_\delta}[u]]=F_{c_{A,m}\modcomp (I-c_FN)_\delta}[u]
	     =F_{c_F}[u]=\frac{1}{1-E_{x_1}[u](t)}.
$$
\endpr

In the next section a Hopf algebra structure is defined on the coordinate functions.


\section{Multivariable Hopf algebra for Toeplitz multiplicative output feedback}
\label{sect:multiHA}

All algebraic structures considered in this section are over the field $\mathbb{K}$ of characteristic zero. Let $X=\{x_1,\ldots,x_m\}$ be a finite alphabet with $m$ letters. Each letter has an integer degree $\deg(x_k):=k$. The monoid of words is denoted by $X^*$ and includes the empty word $\mathrm{e}=\emptyset$ for which $\deg(\mathrm{e})=0$. The degree of a word $\eta = x_{i_1} \cdots x_{i_n} \in X^*$ of length $|\eta |:=n$ is defined by
$$
	\deg(\eta):=\sum_{k=1}^{m} k|\eta|_k.
$$
Here $|\eta|_k$ denotes the number of times the letter $x_k \in X$ appears in the word $\eta$.

Consider the polynomial algebra $H^{(\bar{m})}$ generated by the {\it{coordinate functions}} $a^k_\eta$, where $\eta \in X^*$ and the so called {\it{root index}} $k \in [\bar{m}]:=\{1, \ldots, \bar{m}\}$, $\bar{m}\le m$. By defining the degree
\begdi
	\|a^{k}_\eta\| := k + \deg(\eta),
\enddi
$H^{(\bar{m})}$ becomes a graded connected algebra, $H^{(\bar{m})}:=\bigoplus_{n \ge 0} H^{(\bar{m})}_n$, and $\|a^{k}_\eta a^{l}_\kappa\| = \|a^{k}_\eta\| + \|a^{l}_\kappa\|$. The unit in $H^{(\bar{m})}$ is denoted by $\mathbf{1}$, and $\|\mathbf{1}\|=0$, whereas $\|a^{k}_\mathrm{e}\| = k$.

The {\it{left-}} and {\it{right-shift}} maps, ${\theta}_{x_j}: H^{(\bar{m})} \to H^{(\bar{m})}$ respectively $\tilde{\theta}_{x_j}: H^{(\bar{m})} \to H^{(\bar{m})}$, for $x_{j} \in X$, are
taken to be
$$
	{\theta}_{x_j} a^p_\eta:=a^p_{x_j\eta},
	\qquad\
	\tilde{\theta}_{x_{j}} a^p_\eta:= {a}^p_{\eta x_{j}}
$$
with $\theta_{x_j}\un = \tilde{\theta}_{x_j}\un =0$. On products in $H^{(\bar{m})}$ both these maps act as derivations
$$
	{\theta}_{x_j} a^p_\eta a^q_\mu
	:=({\theta}_{x_j} a^p_\eta) a^q_\mu +  a^p_\eta ({\theta}_{x_j}a^q_\mu),
$$
and analogously for $\tilde{\theta}_{x_{j}}$. For a word $\eta = x_{i_1} \cdots x_{i_n} \in X^*$
$$
	{\theta}_{\eta} := {\theta}_{x_{i_1}} \cdots {\theta}_{x_{i_n}},
	\qquad\
	\tilde{\theta}_{\eta} := \tilde{\theta}_{x_{i_n}} \cdots \tilde{\theta}_{x_{i_1}}.
$$
Hence, any element $a^i_\eta \in H^{(\bar{m})}$ with $\eta = x_{i_1} \cdots x_{i_n} \in X^*$ can be written
$$
	a^i_\eta = {\theta}_{\eta}  a_\mathrm{e}^i = \tilde{\theta}_{\eta} a_\mathrm{e}^i.
$$

In the following it will be shown how $\tilde{\theta}_{\eta}$ can be employed to define a coproduct $\Delta : H^{(\bar{m})} \to H^{(\bar{m})} \otimes H^{(\bar{m})}$. First, for the coordinate functions with respect to the empty word, $a_\mathrm{e}^l$, $1 \leq l \leq \bar{m}$, the coproduct is defined to be
\begin{align}
\label{coproduct1a}
	\Delta a_\mathrm{e}^l := a_\mathrm{e}^l \otimes \un + \un \otimes a_\mathrm{e}^l
						+ \sum_{k=1}^{l-1} a_\mathrm{e}^k \otimes a_\mathrm{e}^{l-k}.
\end{align}
Note that $a_\mathrm{e}^1$ is by definition primitive, i.e., $\Delta a_\mathrm{e}^1 = a_\mathrm{e}^1 \otimes \un + \un \otimes a_\mathrm{e}^1$. The next step is to define $\Delta$ on any $a^i_\eta$ with $1 \leq i \leq \bar{m}$ and $|\eta|>0$ by specifying intertwining relations between the maps $\tilde{\theta}_{x_i}$ and the coproduct
\begin{equation}
\label{cproduct1b}
	\Delta \circ {\tilde\theta}_{x_i}
	:= \left(\tilde{\theta}_{x_i} \otimes \id + \id \otimes \tilde{\theta}_{x_i}
	+ \sum_{j=1}^{i-1} \tilde{\theta}_{x_j} \otimes A_\mathrm{e}^{(i-j)}\right) \circ \Delta.
\end{equation}
The map $A_\mathrm{e}^{(k)}$ is defined by
$$
	A_\mathrm{e}^{(k)} a^i_\eta := a^i_\eta a_\mathrm{e}^{k}.
$$
The following notation is used, $\Delta \circ {\tilde\theta}_{x_i}={\tilde\Theta}_{x_i} \circ \Delta$, where
\begdi
	{\tilde\Theta}_{x_i} :=  \tilde{\theta}_{x_i} \otimes \id + \id \otimes \tilde{\theta}_{x_i}
	+ \sum_{j=1}^{i-1} \tilde{\theta}_{x_j} \otimes A_\mathrm{e}^{(i-j)},
\enddi
and ${\tilde\Theta}_{\eta}:={\tilde\Theta}_{x_{i_n}}\cdots {\tilde\Theta}_{x_{i_1}}$ for $\eta = x_{i_1} \cdots x_{i_n} \in X^*$. In this setting, $a^{1}_{x_1}$ is primitive since
$$
	\Delta a^{1}_{x_1}
	= \Delta \circ {\tilde\theta}_{x_1} a^{1}_{\mathrm{e}}
	={\tilde\Theta}_{x_1} \circ \Delta a^{1}_{\mathrm{e}}
	= (\tilde{\theta}_{x_1} \otimes \id + \id \otimes \tilde{\theta}_{x_1})( a_\mathrm{e}^1 \otimes \un + \un \otimes a_\mathrm{e}^1)
	= a^{1}_{x_1} \otimes \un + \un \otimes a^{1}_{x_1},
$$
which follows from $ \tilde{\theta}_{x_j} \un = 0$. The coproduct of $a^{l}_{x_2}$ is
\allowdisplaybreaks
\begin{align}
	\lefteqn{\Delta a^{l}_{x_2}  = \Delta \circ \tilde{\theta}_{x_2} a_\mathrm{e}^{l}
		= {\tilde\Theta}_{x_2} \circ \Delta a_\mathrm{e}^{l}
		=\big(\tilde{\theta}_{x_2} \otimes \id + \id \otimes \tilde{\theta}_{x_2}
	+ \tilde{\theta}_{x_1} \otimes A_\mathrm{e}^{(1)}\big) \circ \Delta a_\mathrm{e}^{l} }	\nonumber\\
			&= a^{l}_{x_2} \otimes \un + \un \otimes a^{l}_{x_2}
			+ a^{l}_{x_1} \otimes a^{1}_{\mathrm{e}} + \sum_{k=1}^{l-1} a_{x_2}^k \otimes a_\mathrm{e}^{l-k}
			  + \sum_{k=1}^{l-1} a_\mathrm{e}^k \otimes a_{x_2}^{l-k}
			  + \sum_{k=1}^{l-1} a_{x_1}^k \otimes a_\mathrm{e}^1a_\mathrm{e}^{l-k}. \nonumber
\end{align}
The coproduct of a general $a^{l}_{x_i}$ is
\allowdisplaybreaks
\begin{align}
	\Delta a^{l}_{x_i} &= \Delta \circ \tilde{\theta}_{x_i} a_\mathrm{e}^{l}
		= {\tilde\Theta}_{x_i} \circ \Delta a_\mathrm{e}^{l}
		=\big(\tilde{\theta}_{x_i} \otimes \id + \id \otimes \tilde{\theta}_{x_i}
	+ \sum_{j=1}^{i-1} \tilde{\theta}_{x_j} \otimes A_\mathrm{e}^{(i-j)}\big)
	\circ \Delta a_\mathrm{e}^{l}	\nonumber\\
			&= a^{l}_{x_i} \otimes \un + \un \otimes a^{l}_{x_i}
			+  \sum_{j=1}^{i-1} a^{l}_{x_j} \otimes a^{i-j}_{\mathrm{e}} + \sum_{k=1}^{l-1} a_{x_i}^k \otimes a_\mathrm{e}^{l-k}
            + \sum_{k=1}^{l-1} a_\mathrm{e}^k \otimes a_{x_i}^{l-k}\nonumber\\	
			&\hspace*{0.2in}+  \sum_{j=1}^{i-1}\sum_{k=1}^{l-1} a_{x_j}^k \otimes a_\mathrm{e}^{i-j}a_\mathrm{e}^{l-k}. \label{coprodA}
\end{align}
Observe that the grading is preserved. A few examples may be helpful
\allowdisplaybreaks
\begin{align*}
	\Delta' a^{2}_{x_1} &=
			 a^{1}_{x_1} \otimes a^{1}_{\mathrm{e}}
			+ a^{1}_{\mathrm{e}}  \otimes a^{1}_{x_1}				\\
	\Delta' a^{1}_{x_2} &=
			 a^{1}_{x_1} \otimes a^{1}_{\mathrm{e}} 				\\
	\Delta' a^{2}_{x_2} &=
					 a^{2}_{x_1} \otimes a^{1}_{\mathrm{e}} 	
				    	+ a_{x_2}^1 \otimes a_\mathrm{e}^{1}
			  		+ a_\mathrm{e}^1 \otimes a_{x_2}^{1}
			  		+ a_{x_1}^1 \otimes a_\mathrm{e}^{1}a_\mathrm{e}^{1}. \\
	\Delta' a^{1}_{x_3} &=
					 a^{1}_{x_1} \otimes a^{2}_{\mathrm{e}}
					+ a^{1}_{x_2} \otimes a^{1}_{\mathrm{e}}			\\
	\Delta' a^{2}_{x_3} &=
					 a^{2}_{x_1} \otimes a^{2}_{\mathrm{e}}
					+ a^{2}_{x_2} \otimes a^{1}_{\mathrm{e}}		
					+ a_{x_3}^1 \otimes a_\mathrm{e}^{1}
			  + a_\mathrm{e}^1 \otimes a_{x_3}^{1}
			  + a_{x_1}^1 \otimes a_\mathrm{e}^{2}a_\mathrm{e}^{1}
			  + a_{x_2}^1 \otimes a_\mathrm{e}^{1}a_\mathrm{e}^{1},
\end{align*}
where $\Delta' a^{l}_{\eta}:=\Delta a^{l}_{\eta} - a^{l}_{\eta} \otimes \un - \un \otimes a^{l}_{\eta}$ is the reduced coproduct. For the element $a^{l}_{x_2x_1}$ one finds the following coproduct
\allowdisplaybreaks
\begin{align}
		\lefteqn{\Delta a^{l}_{x_2x_1}
		= \Delta \circ \tilde{\theta}_{x_1} \tilde{\theta}_{x_2} a_\mathrm{e}^{l}
		= {\tilde\Theta}_{x_1}{\tilde\Theta}_{x_2} \circ \Delta a_\mathrm{e}^{l}}\nonumber\\
			&=a^{l}_{x_2x_1} \otimes \un + \un \otimes a^{l}_{x_2x_1}
				+ a^{l}_{x_1} \otimes a^{1}_{x_1}
				+ a^{l}_{x_1x_1} \otimes a^{1}_{\mathrm{e}}
				+ \sum_{k=1}^{l-1} a_{x_2x_1}^k \otimes a_\mathrm{e}^{l-k}\nonumber\\
			&	+ \sum_{k=1}^{l-1} a_{x_1}^k \otimes a_{x_2}^{l-k}
				+ \sum_{k=1}^{l-1} a_{x_1x_1}^k \otimes a_\mathrm{e}^1a_\mathrm{e}^{l-k}
				+ \sum_{k=1}^{l-1} a_{x_2}^k \otimes a_{x_1}^{l-k}
			  	+ \sum_{k=1}^{l-1} a_\mathrm{e}^k \otimes a_{x_2x_1}^{l-k}\nonumber\\	
			& 	+ \sum_{k=1}^{l-1} a_{x_1}^k \otimes a_{x_1}^1a_\mathrm{e}^{l-k}
			  	+ \sum_{k=1}^{l-1} a_{x_1}^k \otimes a_\mathrm{e}^1a_{x_1}^{l-k}.\nonumber
\end{align}
The general formula for words of length two is
\allowdisplaybreaks
\begin{align*}
		\lefteqn{\Delta a^{l}_{x_jx_i}
		= \Delta \circ \tilde{\theta}_{x_i} \tilde{\theta}_{x_j} a_\mathrm{e}^{l}
		= {\tilde\Theta}_{x_i} {\tilde\Theta}_{x_j} \circ \Delta a_\mathrm{e}^{l}	}  \\
			& = a^{l}_{x_jx_i} \otimes \un + \un \otimes a^{l}_{x_jx_i}
				+  \sum_{n=1}^{j-1} a^{l}_{x_nx_i} \otimes a^{j-n}_{\mathrm{e}}
				+  \sum_{n=1}^{j-1} a^{l}_{x_n} \otimes a^{j-n}_{x_i}
				+ \sum_{s=1}^{i-1} a^{l}_{x_jx_s} \otimes a_\mathrm{e}^{i-s} \\
				&+ \sum_{s=1}^{i-1} \sum_{n=1}^{j-1} a^{l}_{x_nx_s} \otimes a_\mathrm{e}^{i-s} a^{j-n}_{\mathrm{e}}
				+ \sum_{k=1}^{l-1} a_{x_jx_i}^k \otimes a_\mathrm{e}^{l-k}
			 	+ \sum_{k=1}^{l-1} a_{x_i}^k \otimes a_{x_j}^{l-k}
			 	+  \sum_{n=1}^{j-1}\sum_{k=1}^{l-1} a_{x_nx_i}^k \otimes a_\mathrm{e}^{j-n}a_\mathrm{e}^{l-k}	\\
			&	+ \sum_{k=1}^{l-1} a_{x_j}^k \otimes a_{x_i}^{l-k}
			  	+ \sum_{k=1}^{l-1} a_\mathrm{e}^k \otimes a_{x_jx_i}^{l-k}
			 	+  \sum_{n=1}^{j-1}\sum_{k=1}^{l-1} a_{x_n}^k \otimes a_{x_i}^{j-n}a_\mathrm{e}^{l-k}
			  	+  \sum_{n=1}^{j-1}\sum_{k=1}^{l-1} a_{x_n}^k \otimes a_\mathrm{e}^{j-n}a_{x_i}^{l-k}\\
			&	+  \sum_{s=1}^{i-1}\sum_{k=1}^{l-1} a_{x_jx_s}^k \otimes a_\mathrm{e}^{i-s}a_\mathrm{e}^{l-k}
				+  \sum_{s=1}^{i-1}\sum_{k=1}^{l-1} a_{x_s}^k \otimes a_\mathrm{e}^{i-s}a_{x_j}^{l-k}
				+  \sum_{s=1}^{i-1} \sum_{n=1}^{j-1}\sum_{k=1}^{l-1} a_{x_nx_s}^k \otimes a_\mathrm{e}^{i-s}a_\mathrm{e}^{j-n}a_\mathrm{e}^{l-k}.
\end{align*}
The coproduct is then extended multiplicatively to all of $H^{(\bar{m})}$ and $\Delta(\un):=\un \otimes \un$.

\begin{theorem}\label{thm:newMIMO-HA}
$H^{(\bar{m})}$ is a connected graded commutative non-cocommutative Hopf algebra with unit map $u:\mathbb{K} \to H^{(\bar{m})} $, counit $\epsilon: H^{(\bar{m})} \to \mathbb{K}$ and coproduct $\Delta: H^{(\bar{m})} \to H^{(\bar{m})} \otimes H^{(\bar{m})}$
\begin{equation}
\label{coproduct}
	\Delta a_\eta^k ={\tilde\Theta}_{\eta}\circ \Delta  a^k_\mathrm{e}.
\end{equation}
\end{theorem}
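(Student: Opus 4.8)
The plan is to check the bialgebra axioms directly on the generators $a^k_\eta$ and then invoke the standard fact that a connected graded bialgebra carries a unique antipode, computable by the usual recursion \cite{Figueroa-Gracia-Bondia_05,manchon2}. Commutativity is immediate since $H^{(\bar m)}$ is a polynomial algebra, and non-cocommutativity is already witnessed by $\Delta' a^{1}_{x_2}=a^{1}_{x_1}\otimes a^{1}_{\mathrm{e}}$, which is not flip-symmetric. For the grading I would first record that $\tilde\theta_{x_j}$ raises degree by $j$, that $A_\mathrm{e}^{(k)}$ (right multiplication by $a^k_\mathrm{e}$) raises degree by $k$, and that $\Delta a^l_\mathrm{e}=\sum_{i=0}^{l}a^i_\mathrm{e}\otimes a^{l-i}_\mathrm{e}$ (with $a^0_\mathrm{e}:=\un$) is homogeneous of total degree $l$; hence each of the three summands of $\tilde\Theta_{x_j}$ raises the total degree on $H^{(\bar m)}\otimes H^{(\bar m)}$ by exactly $j$, so by \eqref{coproduct} the element $\Delta a^k_\eta$ is homogeneous of degree $k+\deg(\eta)=\|a^k_\eta\|$ and $\Delta$ is a graded algebra map. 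Connectedness is clear because every generator has degree $\ge 1$, so $H^{(\bar m)}_0=\mathbb K\un$.

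Next I would establish, by induction on $|\eta|$, that $\Delta a^k_\eta-a^k_\eta\otimes\un-\un\otimes a^k_\eta$ lies in $\bar H\otimes\bar H$, where $\bar H:=\ker\epsilon$ is the augmentation ideal and $\epsilon$ is the projection onto $H^{(\bar m)}_0$. The base case $\eta=\mathrm{e}$ is visible in \eqref{coproduct1a}. For the step, write $\eta=\mu x_j$ so that $\Delta a^k_\eta=\tilde\Theta_{x_j}(\Delta a^k_\mu)$ by \eqref{coproduct}, and observe that each type of summand in $\tilde\Theta_{x_j}$ either reproduces $a^k_\eta\otimes\un$ and $\un\otimes a^k_\eta$ (when it acts on the two ``primitive-looking'' terms of $\Delta a^k_\mu$) or lands in $\bar H\otimes\bar H$, using $\tilde\theta_{x_j}\un=0$ and the fact that $\tilde\theta_{x_j}$ and $A_\mathrm{e}^{(j-s)}$ preserve $\bar H$. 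This simultaneously yields the counit axioms, since applying $\epsilon\otimes\id$ (resp.\ $\id\otimes\epsilon$) to the decomposition returns $a^k_\eta$ on generators and both sides are algebra maps.

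The heart of the proof is coassociativity. Since $(\Delta\otimes\id)\Delta$ and $(\id\otimes\Delta)\Delta$ are algebra maps $H^{(\bar m)}\to (H^{(\bar m)})^{\otimes 3}$, it is enough to check equality on the generators $a^k_\eta$, again by induction on $|\eta|$; the base case $\eta=\mathrm{e}$ reduces to $(\Delta\otimes\id)\Delta a^l_\mathrm{e}=\sum_{i+j+k=l}a^i_\mathrm{e}\otimes a^j_\mathrm{e}\otimes a^k_\mathrm{e}=(\id\otimes\Delta)\Delta a^l_\mathrm{e}$. For the step the key technical point is to exhibit one operator $\tilde\Theta^{(3)}_{x_j}$ on $(H^{(\bar m)})^{\otimes 3}$ satisfying both intertwining identities $(\Delta\otimes\id)\circ\tilde\Theta_{x_j}=\tilde\Theta^{(3)}_{x_j}\circ(\Delta\otimes\id)$ and $(\id\otimes\Delta)\circ\tilde\Theta_{x_j}=\tilde\Theta^{(3)}_{x_j}\circ(\id\otimes\Delta)$. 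Here I would use $\Delta\circ\tilde\theta_{x_i}=\tilde\Theta_{x_i}\circ\Delta$ (which is \eqref{cproduct1b}, equivalently a consequence of \eqref{coproduct}) together with $\Delta\circ A_\mathrm{e}^{(k)}=\mathcal A^{(k)}\circ\Delta$, where $\mathcal A^{(k)}$ is right multiplication by $\Delta a^k_\mathrm{e}$; expanding shows that the candidate produced from $(\Delta\otimes\id)$, namely $\tilde\Theta_{x_j}\otimes\id+\id^{\otimes 2}\otimes\tilde\theta_{x_j}+\sum_{s=1}^{j-1}\tilde\Theta_{x_s}\otimes A_\mathrm{e}^{(j-s)}$, coincides with the one produced from $(\id\otimes\Delta)$, namely $\tilde\theta_{x_j}\otimes\id^{\otimes 2}+\id\otimes\tilde\Theta_{x_j}+\sum_{s=1}^{j-1}\tilde\theta_{x_s}\otimes\mathcal A^{(j-s)}$, after re-indexing the nested sums. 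Granting this, if $\eta=\mu x_j$ then $(\Delta\otimes\id)\Delta a^k_\eta=\tilde\Theta^{(3)}_{x_j}(\Delta\otimes\id)\Delta a^k_\mu=\tilde\Theta^{(3)}_{x_j}(\id\otimes\Delta)\Delta a^k_\mu=(\id\otimes\Delta)\Delta a^k_\eta$ by the inductive hypothesis, closing the induction. (A more conceptual alternative would be to identify $a^k_\eta$ with the coordinate $d\mapsto\langle d_k,\eta\rangle$ on the group of Toeplitz affine Fliess operators and check that $\Delta$ is dual to the group product $c_\delta\circ d_\delta$, whence coassociativity is dual to associativity of $\circ$; but this requires the explicit formula $(c_\delta\circ d_\delta)_{\rm Toep}=(c_{\rm Toep}\modcomp d_\delta)\shuffle d_{\rm Toep}$ and is arguably no shorter.)

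Having shown that $(H^{(\bar m)},\mu,\Delta,u,\epsilon)$ is a connected graded bialgebra, the existence and uniqueness of the antipode $S$ follows from the structure theory of connected graded Hopf algebras. I expect the genuinely delicate step to be the operator identity for $\tilde\Theta^{(3)}_{x_j}$: one must carefully match the triple sums $\sum_{s}\sum_{r<s}(\cdots)$ against $\sum_{s}\sum_{p}(\cdots)$ and verify that both range over the same set of index triples $(r,a,b)$ with $r,a,b\ge 1$ and $r+a+b=j$, the underlying reason being coassociativity of $\Delta a^k_\mathrm{e}$ in the ``multiplier'' slots combined with the derivation rule for $\tilde\theta$. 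Everything else is a routine, if lengthy, propagation of the graded and augmentation structure through \eqref{coproduct}.
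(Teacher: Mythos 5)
Your proposal is correct and follows essentially the same route as the paper: coassociativity is proved by induction on word length, with the empty-word case handled by the re-indexing identity $\sum_{k}\sum_{p<k} a^p_\mathrm{e}\otimes a^{k-p}_\mathrm{e}\otimes a^{l-k}_\mathrm{e}=\sum_{k}\sum_{p} a^{k}_\mathrm{e}\otimes a^p_\mathrm{e}\otimes a^{l-k-p}_\mathrm{e}$ and the inductive step by pushing $\Delta\otimes\id$ and $\id\otimes\Delta$ through $\tilde\Theta_{x_i}$ via $\Delta\circ\tilde\theta_{x_i}=\tilde\Theta_{x_i}\circ\Delta$ and $\Delta\circ A_\mathrm{e}^{(k)}=\bigl(A_\mathrm{e}^{(k)}\otimes\id+\id\otimes A_\mathrm{e}^{(k)}+\sum_{p} A_\mathrm{e}^{(p)}\otimes A_\mathrm{e}^{(k-p)}\bigr)\circ\Delta$, then matching the nested sums — exactly the paper's computation, with your explicit grading and counit checks being routine details the paper leaves implicit.
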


\begin{proof}
$H^{(\bar{m})}=\bigoplus_{n\ge0} H_n^{(\bar{m})}$ is connected graded and commutative by construction. In addition, it is clear that the coproduct is non-cocommutative. What is left to be shown is coassociativity. This is done by first proving the claim for $a_\mathrm{e}^{l}$, which follows from the identity
$$
	\sum_{k=1}^{l-1} \sum_{p=1}^{k-1} a_\mathrm{e}^p \otimes a_\mathrm{e}^{k-p} \otimes a_\mathrm{e}^{l-k}
	=
	\sum_{k=1}^{l-1} \sum_{p=1}^{l-k-1} a_\mathrm{e}^{k} \otimes a_\mathrm{e}^p \otimes a_\mathrm{e}^{l-k-p}.
$$
From $ \Delta(a_{\eta x_i}^k) = \Delta \circ \tilde\theta_{x_{i}} (a_{\eta}^k)
					 = \tilde\Theta_{x_i} \circ  \Delta(a_{\eta}^k)$ it follows that
\begin{align*}
	\lefteqn{(\Delta \otimes \id) \circ \Delta(a_{\eta x_i}^k) = (\Delta \otimes \id)  \circ \tilde \Theta_{x_i} \circ \Delta(a_{\eta}^k)}\\
	&= \Big( \Delta  \circ \tilde{\theta}_{x_i} \otimes \id +  \id \otimes \id \otimes \tilde{\theta}_{x_i}
	+ \sum_{j=1}^{i-1} \Delta \circ  \tilde{\theta}_{x_j} \otimes A_\mathrm{e}^{(i-j)}\Big) \circ \Delta(a_{\eta}^k)\\
	&=\Big(\tilde{\Theta}_{x_i} \otimes \id +  \id \otimes \id \otimes \tilde{\theta}_{x_i}
	+ \sum_{j=1}^{i-1} \tilde{\Theta}_{x_j} \otimes A_\mathrm{e}^{(i-j)}\Big) (\Delta \otimes \id)  \circ \Delta(a_{\eta}^k)\\
	&=\Big(\tilde{\theta}_{x_i} \otimes \id \otimes \id
		+ \id \otimes \tilde{\theta}_{x_i} \otimes \id
		+  \id \otimes \id \otimes \tilde{\theta}_{x_i} \\
	&	+ \sum_{j=1}^{i-1} \tilde{\theta}_{x_j} \otimes A_\mathrm{e}^{(i-j)}\otimes \id
	 	+ \sum_{j=1}^{i-1} \tilde{\theta}_{x_j} \otimes \id \otimes A_\mathrm{e}^{(i-j)} \\
	&    + \sum_{j=1}^{i-1}  \id \otimes \tilde{\theta}_{x_j} \otimes A_\mathrm{e}^{(i-j)}
	     + \sum_{j=1}^{i-1} \sum_{k=1}^{j-1} \tilde{\theta}_{x_k} \otimes A_\mathrm{e}^{(j-k)} \otimes A_\mathrm{e}^{(i-j)}\Big)
		(\id \otimes \Delta)  \circ \Delta(a_{\eta}^k)\\
	&= \Big(\tilde{\theta}_{x_i} \otimes \id \otimes \id + \id \otimes \tilde{\Theta}_{x_i}
		+ \sum_{j=1}^{i-1} \tilde{\theta}_{x_j} \otimes \Big(A_\mathrm{e}^{(i-j)}\otimes \id  + \id \otimes A_\mathrm{e}^{(i-j)}\Big)\\	&	+ \sum_{j=1}^{i-1} \sum_{k=1}^{j-1} \tilde{\theta}_{x_k} \otimes A_\mathrm{e}^{(j-k)} \otimes A_\mathrm{e}^{(i-j)}\Big)
		(\id \otimes \Delta)  \circ \Delta(a_{\eta}^k).
\end{align*}
As noted above, the last sum can be rewritten as
$$
	\sum_{j=1}^{i-1} \sum_{k=1}^{j-1} \tilde{\theta}_{x_k} \otimes A_\mathrm{e}^{(j-k)} \otimes A_\mathrm{e}^{(i-j)}
	=
	\sum_{j=1}^{i-2} \sum_{k=1}^{i-j-1} \tilde{\theta}_{x_j} \otimes A_\mathrm{e}^{(k)} \otimes A_\mathrm{e}^{(i-j-k)}
$$
so that
\begin{align*}
	\lefteqn{\Big(\tilde{\theta}_{x_i} \otimes \id \otimes \id + \id \otimes \tilde{\Theta}_{x_i}
		+ \sum_{j=1}^{i-1} \tilde{\theta}_{x_j} \otimes \big(A_\mathrm{e}^{(i-j)}\otimes \id  + \id \otimes A_\mathrm{e}^{(i-j)}\big)}\\	&	+ \sum_{j=1}^{i-1} \sum_{k=1}^{j-1} \tilde{\theta}_{x_k} \otimes A_\mathrm{e}^{(j-k)} \otimes A_\mathrm{e}^{(i-j)}\Big)
		(\id \otimes \Delta)  \circ \Delta(a_{\eta}^k)\\
	&= \Big(\tilde{\theta}_{x_i} \otimes \id \otimes \id + \id \otimes \tilde{\Theta}_{x_i} \\
	&	+ \sum_{j=1}^{i-1} \tilde{\theta}_{x_j} \otimes \Big(A_\mathrm{e}^{(i-j)}\otimes \id  + \id \otimes A_\mathrm{e}^{(i-j)}
		+ \sum_{k=1}^{i-j-1}  A_\mathrm{e}^{(k)} \otimes A_\mathrm{e}^{(i-j-k)}\Big)\Big)
		(\id \otimes \Delta)  \circ \Delta(a_{\eta}^k)\\
	&= (\id \otimes \Delta)\circ
	\Big(\tilde{\theta}_{x_i} \otimes \id + \id \otimes \tilde{\theta}_{x_i}
		+ \sum_{j=1}^{i-1} \tilde{\theta}_{x_j} \otimes A_\mathrm{e}^{(i-j)}\Big) \circ \Delta(a_{\eta}^k)\\
	&= (\id \otimes \Delta)\circ\Delta(a_{\eta x_i}^k).
\end{align*}
The following was also used in the calculation above
$$
	\Delta \circ A_\mathrm{e}^{(i-j)}
	=
	\Big(A_\mathrm{e}^{(i-j)}\otimes \id  + \id \otimes A_\mathrm{e}^{(i-j)}
		+ \sum_{k=1}^{i-j-1}  A_\mathrm{e}^{(k)} \otimes A_\mathrm{e}^{(i-j-k)}\Big)\circ\Delta,
$$
which follows from $A_\mathrm{e}^{(l)} a_{\eta}^k = a_\mathrm{e}^l a_{\eta}^k$ together with the multiplicativity of $\Delta$.
\end{proof}

In the following, a variant of Sweedler's notation \cite{Sweedler_69} is used for the reduced coproduct, i.e., $\Delta'(a^{l}_{\eta})= \sum a^{l'}_{\eta'} \otimes a^{l''}_{\eta''}$, as well as for the full coproduct
$$
	\Delta(a^{l}_{\eta}) = \sum a^{l'}_{\eta_{(1)}} \otimes a^{l''}_{\eta_{(2)}}
				      = a^{l}_{\eta} \otimes {\bf{1}} + {\bf{1}} \otimes a^{l}_{\eta} + \Delta'(a^{l}_{\eta}).
$$
Connectedness of $H^{(\bar{m})}$ implies for the antipode $S: H^{(\bar{m})} \to H^{(\bar{m})}$ the well known recursions
\begin{equation}
\label{antipode1}
	S a^{l}_{\eta} = -a^{l}_{\eta} - \sum S(a^{l'}_{\eta'})a^{l''}_{\eta''}
			     = -a^{l}_{\eta} - \sum a^{l'}_{\eta'}S(a^{l''}_{\eta''}).
\end{equation}
A few examples are given first. Coproduct \eqref{coproduct1a} implies for the elements $a^{k}_\mathrm{e}$ that\begin{equation}
\label{emptywordcoprod}
	S a^{k}_\mathrm{e} = -a^{k}_\mathrm{e}
	+ \sum_{i=2}^{k} (-1)^i \sum_{p_1 + \cdots + p_i = k \atop p_j >0} a^{p_1}_\mathrm{e} \cdots a^{p_i}_\mathrm{e}.
\end{equation}
For example,
\begdi
	S a^{1}_\mathrm{e} = - a^{1}_{\mathrm{e}},			
	\qquad S a^{2}_\mathrm{e} = - a^{2}_{\mathrm{e}} + a^{1}_{\mathrm{e}}a^{1}_{\mathrm{e}},
	\qquad S a^{3}_\mathrm{e} = - a^{3}_{\mathrm{e}} + 2 a^{1}_{\mathrm{e}}a^{2}_{\mathrm{e}}
	- a^{1}_{\mathrm{e}}a^{1}_{\mathrm{e}}a^{1}_{\mathrm{e}}.		
\enddi
The following examples are given for comparison with \eqref{antipode1time}:
\begin{align*}	
	S a^{1}_{x_1} &= - a^{1}_{x_1}							\\
	S a^{1}_{x_2} &= - a^{1}_{x_2}
					+ a^{1}_{x_1} a^{1}_{\mathrm{e}} 		\\
	S a^{1}_{x_3} &= - a^{1}_{x_3}
					+ a^{1}_{x_1} a^{2}_{\mathrm{e}}
					- a^{1}_{x_1} a^{1}_{\mathrm{e}} a^{1}_{\mathrm{e}}
					+ a^{1}_{x_2}a^{1}_{\mathrm{e}}		\\
	S a^{2}_{x_1} &= - a^{2}_{x_1}	
					+ 2 a^{1}_{x_1}a^{1}_{\mathrm{e}} 		\\
	S a^{2}_{x_2} &= - a^{2}_{x_2}
					+ a^{2}_{x_1}a^{1}_{\mathrm{e}}	
					- 2 a^{1}_{x_1}a^{1}_{\mathrm{e}} a^{1}_{\mathrm{e}}
					+ 2 a^{1}_{x_2} a^{1}_{\mathrm{e}}	\\
	S a^{2}_{x_3} &= - a^{2}_{x_3}
					+ 2 a^{1}_{x_3} a^{1}_{\mathrm{e}}	
					- 2 a^{1}_{x_2} a^{1}_{\mathrm{e}}a^{1}_{\mathrm{e}}
					+ a^{2}_{x_2} a^{1}_{\mathrm{e}}
					- a^{2}_{x_1}a^{1}_{\mathrm{e}}a^{1}_{\mathrm{e}}
					+ a^{2}_{x_1}a^{2}_{\mathrm{e}} \nonumber\\
					&\hspace*{0.2in}- 2 a^{1}_{x_1}a^{1}_{\mathrm{e}} a^{2}_{\mathrm{e}}
					+ 2 a^{1}_{x_1}a^{1}_{\mathrm{e}} a^{1}_{\mathrm{e}}a^{1}_{\mathrm{e}}
\end{align*}

The next theorem uses the coproduct formula \eqref{cproduct1b} to provide a simple formula for the antipode of $H^{(\bar{m})}$.

\begin{theorem}\label{thm:antipode}
For any nonempty word $\eta= x_{i_1} \cdots x_{i_l}\in X^\ast$, the antipode $S: H^{(\bar{m})} \to H^{(\bar{m})}$ can be written as
\begin{equation}
\label{antipodeSpecial}
	S a^k_\eta =  \tilde\Theta'_{\eta}(Sa^k_\mathrm{e}),
\end{equation}
where $\tilde\Theta'_{\eta}:= \tilde\theta'_{x_{i_l} } \circ \cdots \circ  \tilde\theta'_{x_{i_1}}$
with
\begin{align*}
	\tilde\theta'_{x_l}&:=  \tilde \theta_{x_l}
				+ \sum_{j=1}^{l-1} S(a_\mathrm{e}^{l-j}) \tilde{\theta}_{x_j}.
\end{align*}
\end{theorem}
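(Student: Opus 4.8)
The plan is to derive Theorem~\ref{thm:antipode} from the single operator identity
\begin{displaymath}
	S\circ\tilde\theta_{x_i} = \tilde\theta'_{x_i}\circ S, \qquad i=1,\ldots,m,
\end{displaymath}
valid on all of $H^{(\bar{m})}$. Granting this, the theorem follows by peeling off the right-shifts one at a time: since $a^k_\eta = \tilde\theta_{x_{i_l}}\cdots\tilde\theta_{x_{i_1}}a^k_\mathrm{e}$, repeated application gives $S a^k_\eta = \tilde\theta'_{x_{i_l}}\cdots\tilde\theta'_{x_{i_1}}(S a^k_\mathrm{e}) = \tilde\Theta'_\eta(S a^k_\mathrm{e})$, each intermediate argument again being a coordinate function.

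The first step is to reduce the operator identity to the generators of $H^{(\bar{m})}$. Since $H^{(\bar{m})}$ is commutative, $S$ is an algebra homomorphism; and a direct check from the definition of $\tilde\theta'_{x_i}$ (using that each $\tilde\theta_{x_j}$ is a derivation and that $H^{(\bar{m})}$ is commutative) shows that $\tilde\theta'_{x_i}$ is itself a derivation. Writing $T_i := S\circ\tilde\theta_{x_i} - \tilde\theta'_{x_i}\circ S$, these two facts combine to give the twisted Leibniz rule $T_i(uv) = (T_i u)\,S(v) + S(u)\,(T_i v)$. Hence $T_i$ vanishes on all of $H^{(\bar{m})}$ as soon as it vanishes on $\un$ --- immediate, since $\tilde\theta_{x_j}\un = 0$ --- and on every coordinate function $a^k_\eta$. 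Thus everything reduces to proving the identity $S a^k_{\eta x_i} = \tilde\theta'_{x_i}(S a^k_\eta)$.

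This last identity I would prove by induction on the degree $\|a^k_\eta\|$, combining the antipode recursion in its second form, $S a^l_\eta = -a^l_\eta - \sum a^{l'}_{\eta'} S(a^{l''}_{\eta''})$ from \eqref{antipode1}, with the intertwining relation $\Delta\circ\tilde\theta_{x_i} = \tilde\Theta_{x_i}\circ\Delta$ from \eqref{cproduct1b}. Applying the latter to the coordinate function $a^k_\eta$ and separating the primitive part isolates
\begin{displaymath}
	\Delta' a^k_{\eta x_i} = \sum_{j=1}^{i-1} a^k_{\eta x_j}\otimes a_\mathrm{e}^{i-j} + \tilde\Theta_{x_i}\bigl(\Delta' a^k_\eta\bigr).
\end{displaymath}
Feeding this into the antipode recursion for $S a^k_{\eta x_i}$, and separately expanding $\tilde\theta'_{x_i}(S a^k_\eta)$ by applying the derivations $\tilde\theta_{x_i}$ and $\tilde\theta_{x_j}$ to $S a^k_\eta = -a^k_\eta - \sum a^{k'}_{\eta'} S(a^{k''}_{\eta''})$ and using multiplicativity of $S$ --- notably $S(a^{k''}_{\eta''} a_\mathrm{e}^{i-j}) = S(a^{k''}_{\eta''})\,S(a_\mathrm{e}^{i-j})$ --- a bookkeeping computation collapses everything except a single residual term, yielding
\begin{displaymath}
	T_i(a^k_\eta) = S a^k_{\eta x_i} - \tilde\theta'_{x_i}(S a^k_\eta) = -\sum a^{k'}_{\eta'}\, T_i\bigl(a^{k''}_{\eta''}\bigr),
\end{displaymath}
where the sum runs over $\Delta'(a^k_\eta) = \sum a^{k'}_{\eta'}\otimes a^{k''}_{\eta''}$. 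Each $a^{k''}_{\eta''}$ has degree strictly less than $\|a^k_\eta\|$, so by the induction hypothesis $T_i$ vanishes on every coordinate function of lower degree, hence --- via the twisted Leibniz rule --- on every element of lower degree; therefore the right-hand side is zero. The base case $\|a^k_\eta\| = 1$, i.e. $a^k_\eta = a^1_\mathrm{e}$, is trivial since $\Delta'(a^1_\mathrm{e}) = 0$.

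The main obstacle is the bookkeeping step leading to the residual recursion: one must carefully track how the three pieces $\tilde\theta_{x_i}\otimes\id$, $\id\otimes\tilde\theta_{x_i}$, and $\sum_{j}\tilde\theta_{x_j}\otimes A_\mathrm{e}^{(i-j)}$ of $\tilde\Theta_{x_i}$ pair against $S$ in the second tensor factor and match, term by term, the Leibniz expansion of $\tilde\theta_{x_i}(S a^k_\eta)$ together with the $\sum_j S(a_\mathrm{e}^{i-j})\tilde\theta_{x_j}$ correction built into $\tilde\theta'_{x_i}$; the terms carrying $a^k_{\eta x_j}$ and the mixed sums cancel in pairs, leaving only $-\sum a^{k'}_{\eta'} T_i(a^{k''}_{\eta''})$. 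Once this cancellation is established, the remaining two inductions (on degree, and the final peeling on word length) are routine.
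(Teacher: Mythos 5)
Your proof is correct, and the ``bookkeeping'' step you left as a claim does check out: writing $\Delta' a^k_\eta=\sum a_{(1)}\otimes a_{(2)}$, the intertwining relation gives $\Delta' a^k_{\eta x_i}=\sum_{j=1}^{i-1}a^k_{\eta x_j}\otimes a_\mathrm{e}^{i-j}+\tilde\Theta_{x_i}(\Delta' a^k_\eta)$, and after expanding $S a^k_{\eta x_i}$ by the recursion \eqref{antipode1} and $\tilde\theta'_{x_i}(Sa^k_\eta)$ by the Leibniz rule, the terms $a^k_{\eta x_j}S(a_\mathrm{e}^{i-j})$, the terms $(\tilde\theta_{x_i}a_{(1)})S(a_{(2)})$, and the mixed sums $(\tilde\theta_{x_j}a_{(1)})S(a_{(2)})S(a_\mathrm{e}^{i-j})$ cancel pairwise (using commutativity and multiplicativity of $S$), leaving exactly $T_i(a^k_\eta)=-\sum a_{(1)}T_i(a_{(2)})$; so there is no gap, though this computation is the heart of the argument and should be written out. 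Your route is organized differently from the paper's, even though both rest on the same ingredients (the relation $\Delta\circ\tilde\theta_{x_i}=\tilde\Theta_{x_i}\circ\Delta$, the derivation property, and $S$ being an algebra morphism): you isolate the operator identity $S\circ\tilde\theta_{x_i}=\tilde\theta'_{x_i}\circ S$, reduce it to coordinate functions via the twisted Leibniz rule for $T_i$, and run a single induction on the grading $\|a^k_\eta\|$, whereas the paper runs a nested induction on root index and word length, treating length-one words by explicit comparison of antipode formulas (with an inner induction on the root index) and handling the word-length step at the operator level through the convolution form $S=-m_{H^{(\bar m)}}\circ(P\otimes S)\circ\Delta$, commuting $\tilde\Theta'_{x_{i_n}}$ past $P\otimes S$. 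What your version buys is uniformity: the empty-word and single-letter cases need no separate treatment, and the point where the induction hypothesis is applied to right Sweedler legs --- which are products of coordinate functions, hence the need for the Leibniz reduction, a subtlety you correctly flag --- is made explicit, while the analogous step in the paper ($P\otimes\tilde\Theta'_{x_{i_n}}\circ S=P\otimes S\circ\tilde\theta_{x_{i_n}}$) relies tacitly on the same product argument. The paper's operator-level computation, in exchange, exhibits directly how $\tilde\theta'_{x_i}$ emerges from $\tilde\Theta_{x_i}$ under $m_{H^{(\bar m)}}\circ(P\otimes S)$.
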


For instance, calculating
\begin{align*}
	\tilde\Theta'_{x_1}(Sa^3_\mathrm{e})
	&= \tilde \theta_{x_1}(- a^{3}_{\mathrm{e}} + 2 a^{1}_{\mathrm{e}}a^{2}_{\mathrm{e}}
	- a^{1}_{\mathrm{e}}a^{1}_{\mathrm{e}}a^{1}_{\mathrm{e}})= - a^{3}_{x_1}
		+ 2 a^{1}_{x_1}a^{2}_{\mathrm{e}}
		+ 2 a^{1}_{\mathrm{e}}a^{2}_{x_1}
		- 3 a^{1}_{x_1}a^{1}_{\mathrm{e}}a^{1}_{\mathrm{e}},
\end{align*}
which coincides with $Sa^3_{x_1}$. Another example is
$$
	\tilde\Theta'_{x_2}(Sa^2_\mathrm{e})
	= (  \tilde \theta_{x_2}
				+ S(a_\mathrm{e}^{1}) \tilde{\theta}_{x_1})( - a^{2}_{\mathrm{e}} + a^{1}_{\mathrm{e}}a^{1}_{\mathrm{e}})
	=  - a^{2}_{x_2}
	    + 2 a^{1}_{x_2} a^{1}_{\mathrm{e}}
					+ a^{2}_{x_1}a^{1}_{\mathrm{e}}	
					- 2 a^{1}_{x_1}a^{1}_{\mathrm{e}} a^{1}_{\mathrm{e}}.			
$$

\begin{proof}
The proof follows by a nested induction using the weight of the root index and word length. First, formula \eqref{antipodeSpecial} is shown to hold for words of length one. Note that the recursions \eqref{antipode1} can be written in terms of the convolution product, i.e., $-S=P * S=S * P$, which is defined in terms of the coproduct \eqref{coproduct}
$$
	S=-m_{H^{(\bar{m})}}\circ\big(P \otimes S \big)\circ\Delta
	  =-m_{H^{(\bar{m})}}\circ\big( S \otimes P \big)\circ\Delta.
$$
Here $m_{H^{(\bar{m})}}$ denotes the product in $H^{(\bar{m})}$ and $P:= \id - u \circ \epsilon$ is the projector that maps the unit $\un$ in ${H^{(\bar{m})}}$ to zero and reduces to the identity on $H_+^{(\bar{m})}=\bigoplus_{n>0} H_n^{(\bar{m})}$.  Formula \eqref{antipodeSpecial} applied to $a^1_{x_l}$ gives
$$
	 \tilde\Theta'_{x_l}(Sa^1_\mathrm{e})
	 = \Big(\tilde \theta_{x_l}
				+ \sum_{j=1}^{l-1} S(a_\mathrm{e}^{l-j}) \tilde{\theta}_{x_j}\Big)Sa^1_\mathrm{e}
	 =-a^1_{x_l} - \sum_{j=1}^{l-1} S(a_\mathrm{e}^{l-j}) a^1_{x_j}, 			
$$
where \eqref{emptywordcoprod} was used. This coincides with
\begin{align*}
	Sa^1_{x_l}
	&=-m_{H^{(\bar{m})}}\circ\big(P \otimes S \big) \circ \Delta a^1_{x_l}\\
	&=-m_{H^{(\bar{m})}}\circ\big(P \otimes S \big)\Big(
	a^{1}_{x_l} \otimes \un + \un \otimes a^{1}_{x_l}
			+  \sum_{j=1}^{l-1} a^{1}_{x_j} \otimes a^{l-j}_{\mathrm{e}}\Big) \nonumber\\	
	&=-a^1_{x_l} - \sum_{j=1}^{l-1} S(a_\mathrm{e}^{l-j}) a^1_{x_j}. 		
\end{align*}
Now \eqref{antipodeSpecial} applied to $a^k_{x_l}$ gives
\begin{align*}
	 \tilde\Theta'_{x_l}(Sa^k_\mathrm{e})
	 &= \Big(\tilde \theta_{x_l}
				+ \sum_{j=1}^{l-1} S(a_\mathrm{e}^{l-j}) \tilde{\theta}_{x_j}\Big)Sa^k_\mathrm{e}\\		
          &= \Big(\tilde \theta_{x_l}
				+ \sum_{j=1}^{l-1} S(a_\mathrm{e}^{l-j}) \tilde{\theta}_{x_j}\Big)
				\Big(-a^k_\mathrm{e} - \sum_{w=1}^{k-1}a^w_\mathrm{e}Sa^{k-w}_\mathrm{e} \Big)\\	
	 &=-a^k_{x_l} - \sum_{j=1}^{l-1} S(a_\mathrm{e}^{l-j}) a^k_{x_j}
	 - \sum_{w=1}^{k-1}a^w_{x_l}Sa^{k-w}_\mathrm{e}
	 - \sum_{j=1}^{l-1}\sum_{w=1}^{k-1} a^w_{x_j}S(a_\mathrm{e}^{l-j})S(a^{k-w}_\mathrm{e}) \\
	 &\hspace*{0.2in}- \sum_{w=1}^{k-1}a^w_\mathrm{e} \tilde\Theta'_{x_l}Sa^{k-w}_\mathrm{e}.	
\end{align*}
Using the induction hypothesis on the last term, namely, $\tilde\Theta'_{x_l}Sa^{k-w}_\mathrm{e} = Sa^{k-w}_{x_l}$, gives
\begin{align*}
	\tilde\Theta'_{x_l}(Sa^k_\mathrm{e})
	&= -a^k_{x_l} - \sum_{j=1}^{l-1} S(a_\mathrm{e}^{l-j}) a^k_{x_j}
	    - \sum_{w=1}^{k-1}a^w_{x_l}Sa^{k-w}_\mathrm{e}
	    - \sum_{j=1}^{l-1}\sum_{w=1}^{k-1} a^w_{x_j}S(a_\mathrm{e}^{l-j})S(a^{k-w}_\mathrm{e}) \\
	&\hspace*{0.2in}  - \sum_{w=1}^{k-1}a^w_\mathrm{e} Sa^{k-w}_{x_l}.
\end{align*}
This coincides with the antipode computed via the coproduct in \eqref{coprodA} since
\begin{align*}
	Sa^k_{x_l}
	&=-m_{H^{(\bar{m})}}\circ\big(P \otimes S \big) \circ \Delta a^k_{x_l}\\
	&=-m_{H^{(\bar{m})}}\circ\big(P \otimes S \big)\Big(
	a^{k}_{x_l} \otimes \un + \un \otimes a^{k}_{x_l}
			+  \sum_{j=1}^{l-1} a^{k}_{x_j} \otimes a^{l-j}_{\mathrm{e}} \nonumber\\	
			&\hspace*{0.2in}+ \sum_{w=1}^{k-1} a_{x_l}^w \otimes a_\mathrm{e}^{k-w}
			  +  \sum_{j=1}^{l-1}\sum_{w=1}^{k-1} a_{x_j}^w \otimes a_\mathrm{e}^{l-j}a_\mathrm{e}^{k-w}
			  + \sum_{w=1}^{k-1} a_\mathrm{e}^w \otimes a_{x_l}^{k-w}
			  \Big).
\end{align*}
Now suppose \rref{antipodeSpecial} holds for all words $\nu \in X^*$ up to length $|\nu|=n-1$. The final step is to consider $a^l_\eta$, where $\eta=x_{i_1} \cdots x_{i_n}=\bar \eta x_{i_n}$, i.e., $|\eta|=n$, and $l \in [\bar{m}]$. Observe
\begin{align*}
	\tilde\Theta'_{\eta}(a^l_\mathrm{e})
	&= \tilde\Theta'_{x_{i_n}} S(a^l_{\bar\eta})\\
	&=  -\tilde\Theta'_{x_{i_n}} m_{H^{(\bar{m})}}\circ\big(P \otimes S \big)\circ\Delta a^l_{\bar\eta}\\
	&= -m_{H^{(\bar{m})}}\circ\big((\tilde\Theta'_{x_{i_n}} \otimes \id
	+ \id \otimes \tilde\Theta'_{x_{i_n}}) \circ (P\otimes S) \big)\circ\Delta a^l_{\bar\eta}\\
	&= -m_{H^{(\bar{m})}}\circ\big(
	   P \circ \tilde\Theta'_{x_{i_n}}  \otimes S
	+ P \otimes \tilde\Theta'_{x_{i_n}}\circ S \big)\circ\Delta a^l_{\bar\eta}\\
	&= -m_{H^{(\bar{m})}}\circ\Big(
	   P \circ \tilde \theta_{x_{i_n}} \otimes S
	+ P \circ \sum_{j=1}^{i_n-1} S(a_\mathrm{e}^{i_n-j}) \tilde{\theta}_{x_j} \otimes S
	+ P \otimes S \circ  \tilde \theta_{x_{i_n}}\Big)\circ\Delta a^l_{\bar\eta}\\
	&= -m_{H^{(\bar{m})}}\circ\Big(
	   (P \otimes S ) \circ \big(\tilde \theta_{x_{i_n}} \otimes \id
	+ \sum_{j=1}^{i_n-1} \tilde{\theta}_{x_j} \otimes A_\mathrm{e}^{(i_n-j)}
	+ \id \otimes \tilde \theta_{x_{i_n}}\big)\Big)\circ\Delta a^l_{\bar\eta}\\
	&= -m_{H^{(\bar{m})}}\circ\big(
	   P \otimes S  \big)\circ \tilde\Theta_{x_{i_n}} \circ \Delta a^l_{\bar\eta}\\
	&= -m_{H^{(\bar{m})}}\circ\big(
	   P \otimes S  \big)\circ \Delta a^l_{\eta}=Sa^l_{\eta}.
\end{align*}
The third equality above came from that fact the
$\tilde\Theta'_{x_{i_n}}$ is a sum of derivations. The fourth equality is a consequence of the identity $P \circ \tilde \theta_{x_{i_n}}=\tilde \theta_{x_{i_n}} \circ P$. The step from the fourth to the fifth equality used the induction hypothesis to get $P \otimes \tilde\Theta'_{x_{i_n}}\circ S = P \otimes S \circ  \tilde \theta_{x_{i_n}}$, which holds due to the projector $P$ being on the left-hand side. In addition, the following identity was used:
$$
	m_{H^{(\bar{m})}}\circ\Big(P \circ \sum_{j=1}^{i_n-1} S(a_\mathrm{e}^{i_n-j}) \tilde{\theta}_{x_j} \otimes S \Big)\circ\Delta
	= m_{H^{(\bar{m})}}\circ\Big((P \otimes S) \circ \sum_{j=1}^{i_n-1} \tilde{\theta}_{x_j} \otimes A_\mathrm{e}^{(i_n-j)} \Big)\circ\Delta,
$$
which holds due to $S$ being an algebra morphism.
\end{proof}

The final result is evident from the fact that the feedback structures in Figures~\ref{fig:output-feedback} and \ref{fig:multiplicative-feedback}
coincide when condition \rref{eq:polynomial-feedback} holds with $m=2$.

\begin{corollary}\label{cor:SISO}
For the alphabet $X:=\{x_1,x_2\}$ the Hopf algebra $H^{(1)}$ coincides with the Fa\`a di Bruno-type Hopf algebra for single-input, single-output (SISO) output feedback given in \cite{Foissy_13,Gray-Duffaut_Espinosa_SCL11,Gray-Duffaut_Espinosa_FdB14}.
\end{corollary}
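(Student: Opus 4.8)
The plan is to identify $H^{(1)}$ explicitly with the SISO output feedback Hopf algebra of \cite{Foissy_13,Gray-Duffaut_Espinosa_SCL11,Gray-Duffaut_Espinosa_FdB14} at the level of generators, grading, and coproduct, and then to invoke the standard fact that a connected graded Hopf algebra which is free commutative on a generating set is determined by the coproduct on that set (the counit and, by \eqref{antipode1}, the antipode being forced). So matching these data on generators suffices.

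First I would specialize the constructions of Section~\ref{sect:multiHA} to $\bar m = 1$ and $X=\{x_1,x_2\}$. The only coordinate functions are then $a^1_\eta$ with $\eta \in \{x_1,x_2\}^*$, the grading is $\|a^1_\eta\| = 1 + |\eta|_1 + 2|\eta|_2$, and the general coproduct formulas collapse: \eqref{coproduct1a} reduces to $\Delta a^1_\mathrm{e} = a^1_\mathrm{e}\otimes\un + \un\otimes a^1_\mathrm{e}$, so $a^1_\mathrm{e}$ is primitive, and \eqref{cproduct1b} becomes the two intertwining relations $\Delta\circ\tilde\theta_{x_1} = (\tilde\theta_{x_1}\otimes\id + \id\otimes\tilde\theta_{x_1})\circ\Delta$ and $\Delta\circ\tilde\theta_{x_2} = (\tilde\theta_{x_2}\otimes\id + \id\otimes\tilde\theta_{x_2} + \tilde\theta_{x_1}\otimes A_\mathrm{e}^{(1)})\circ\Delta$, where $A_\mathrm{e}^{(1)} a^1_\eta = a^1_\eta a^1_\mathrm{e}$. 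In particular, all the sample coproducts displayed above ($\Delta' a^2_{x_1}$, $\Delta' a^1_{x_2}$, $\Delta a^l_{x_2 x_1}$, etc.) reduce, for $\bar m = 1$, to expressions in the $a^1_\bullet$ generators alone.

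Next I would recall, in the notation of the cited papers, the generators and the recursive coproduct of the SISO output feedback Hopf algebra: its coordinate functions are indexed by words over a two-letter alphabet whose letters, in the natural grading, have degrees one and two; it has a distinguished primitive generator at the empty word; and its coproduct is produced recursively by two intertwining shift relations, one shift being a pure coderivation and the other carrying a single correction term built from the empty-word coordinate. I would then fix the dictionary sending the degree-one reference letter to $x_1$, the degree-two reference letter to $x_2$, and the reference's distinguished generator to $a^1_\mathrm{e}$, and verify that under this relabeling the two sets of intertwining relations — hence the full coproducts, hence the antipodes — agree term by term. The conceptual justification for this being the correct dictionary is the remark preceding the corollary: for $m=2$, condition \eqref{eq:polynomial-feedback} reads $u_1 = v_1 + y_1 v_2$, so the Toeplitz multiplicative feedback interconnection of Figure~\ref{fig:multiplicative-feedback} degenerates to the scalar output feedback interconnection of Figure~\ref{fig:output-feedback}; consequently the group ${\mathcal T}$ with $m=2$ is the SISO output feedback transformation group, and $H^{(1)}$, being its Hopf algebra of coordinate functions, must coincide with the one attached to it in \cite{Foissy_13,Gray-Duffaut_Espinosa_SCL11,Gray-Duffaut_Espinosa_FdB14}.

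I expect the only real obstacle to be bookkeeping across notations: the reference Hopf algebra is presented with different normalizations (grading conventions, which of its two letters plays the coderivation role, and whether the correction term sits as $\tilde\theta_{x_1}\otimes A_\mathrm{e}^{(1)}$ or as $A_\mathrm{e}^{(1)}\otimes\tilde\theta_{x_1}$, i.e.\ a left versus a right action). The genuine content is therefore checking that the chosen relabeling is consistent on all generators simultaneously, not just in low degree; once the translation is made there is nothing deeper to do, since both algebras are free commutative and the matched coproducts extend multiplicatively.
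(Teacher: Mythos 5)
Your proposal is correct and rests on the same key observation as the paper, whose entire argument is the remark preceding the corollary: for $m=2$ condition \rref{eq:polynomial-feedback} reduces the multiplicative Toeplitz feedback loop of Figure~\ref{fig:multiplicative-feedback} to the output feedback loop of Figure~\ref{fig:output-feedback}, so the associated coordinate Hopf algebras coincide. The only difference is that you supplement this with an explicit dictionary matching the $\bar m=1$ intertwining relations \eqref{coproduct1a}--\eqref{cproduct1b} against the recursive coproduct of the SISO output feedback Hopf algebra, a verification the paper leaves implicit (and which indeed goes through with $x_1$ as the pure coderivation letter and $x_2$ carrying the single $A^{(1)}_{\mathrm{e}}$ correction).
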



\section{Sufficient condition for a center of the Abel equation}
\label{sect:center-condition}

Consider first a new sufficient condition for a center inspired by viewing the Abel equation in terms of a feedback
connection as described in Section~\ref{sect:feedback-recursions}.

\begin{theorem} \label{th:Abel-center}
Let $v_1,v_2,\ldots,v_m\in L_1[0,\omega]$ and $m\geq 2$ be fixed. Then the $m+1$ degree Abel
equation~(\ref{eq:Abel-eqn-degree-m}) has a center at $z=0$ if there exists an $R>0$ such that for every $r<R$ the system of equations
\begin{subequations} \label{eq:uv-equations}
\begin{align}
		v_1(t)&=u_1(t)-\frac{r u_2(t)}{1-rE_{x_1}[u](t)} \\
		v_2(t)&=u_2(t)-\frac{r u_3(t)}{1-rE_{x_1}[u](t)} \\
			&\hspace*{0.1in}\vdots \nonumber \\
	  v_{m-1}(t)&=u_{m-1}(t)-\frac{r u_m(t)}{1-rE_{x_1}[u](t)} \\
	         v_m(t)&=u_m(t),
\end{align}
\end{subequations}
has a solution $u_1,u_2,\ldots,u_m\in L_1[0,\omega]$ with $E_{x_1}[u](t):=\int_0^t u_1(\tau)\,d\tau <1/r$ on the interval $[0,\omega]$ and $E_{x_1}[u](\omega)=0$.
\end{theorem}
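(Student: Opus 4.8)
The plan is to reduce the statement to Theorem~\ref{th:Abel-solution-r-equals-1} by rescaling the dependent variable. Recall that in the canonical coordinates a center at $z=0$ means precisely that for every sufficiently small $r>0$ the solution of \rref{eq:Abel-eqn-degree-m} with $z(0)=r$ is defined on all of $[0,\omega]$ and returns to its initial value, $z(\omega)=r$. Hence it suffices to exhibit, for each $r<R$, such a closed trajectory.

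Fix $r<R$ and let $u=(u_1,\ldots,u_m)$ be a solution of \rref{eq:uv-equations} as in the hypothesis. The substitution $z=rw$ turns \rref{eq:Abel-eqn-degree-m} into the Abel equation $\dot w = \sum_{i=1}^m (r^i v_i)\,w^{i+1}$ with $w(0)=1$. I would then set $\tilde u_i := r^i u_i$ and $\tilde v_i := r^i v_i$ and verify that $\tilde u$ satisfies the hypotheses of Theorem~\ref{th:Abel-solution-r-equals-1} for the data $\tilde v$: since $E_{x_1}[\tilde u]=r\,E_{x_1}[u]$, the bound $E_{x_1}[u](t)<1/r$ gives $E_{x_1}[\tilde u](t)<1$ on $[0,\omega]$, and multiplying the $i$-th equation of \rref{eq:uv-equations} by $r^i$ reproduces exactly the defining relations of Theorem~\ref{th:Abel-solution-r-equals-1} (including $\tilde v_m = \tilde u_m$). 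That theorem then yields $w(t)=1/(1-E_{x_1}[\tilde u](t))=1/(1-rE_{x_1}[u](t))$, hence
\[
	z(t)=\frac{r}{1-rE_{x_1}[u](t)}.
\]

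From here the conclusion follows quickly. Because $E_{x_1}[u](t)<1/r$ throughout $[0,\omega]$, the denominator never vanishes, so $z$ has no finite escape time on this interval and is, by Carath\'{e}odory uniqueness, the solution of \rref{eq:Abel-eqn-degree-m} with the given initial value. Then $E_{x_1}[u](0)=0$ gives $z(0)=r$, and the extra condition $E_{x_1}[u](\omega)=0$ gives $z(\omega)=r$, so the orbit through $r$ is closed; since this holds for all $r<R$, the origin is a center of \rref{eq:Abel-eqn-degree-m}.

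I expect the only delicate points to be bookkeeping ones rather than genuine obstacles: one must use the \emph{strict} inequality $E_{x_1}[u](t)<1/r$ to rule out blow-up of the closed-form solution before time $\omega$, and one must keep in mind that $u$ is allowed to depend on $r$, which is exactly what produces a one-parameter family of periodic orbits rather than a single one. A self-contained alternative is also available and bypasses the rescaling: differentiate $z(t)=r/(1-rE_{x_1}[u](t))$ directly, obtaining $\dot z = u_1 z^2$, and then peel off $u_1,\ldots,u_m$ one at a time using \rref{eq:uv-equations} (noting $r/(1-rE_{x_1}[u])=z$) to recover $\dot z=\sum_{i=1}^m v_i z^{i+1}$. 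Either way the substantive content is supplied by Theorem~\ref{th:Abel-solution-r-equals-1}, of which this is the advertised ``center'' generalization.
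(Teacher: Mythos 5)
Your argument is correct, but it takes a different route from the paper. The paper proves Theorem~\ref{th:Abel-center} by writing down the candidate $z(t)=r/(1-rE_{x_1}[u](t))$ and checking by direct substitution that it solves \rref{eq:Abel-eqn-degree-m} (carrying out the computation only for $m=2$ ``for simplicity''), whereupon $E_{x_1}[u](0)=E_{x_1}[u](\omega)=0$ gives $z(0)=z(\omega)=r$ for every $r<R$. You instead reduce to Theorem~\ref{th:Abel-solution-r-equals-1} via the homogeneity rescaling $z=rw$, $\tilde v_i=r^iv_i$, $\tilde u_i=r^iu_i$, observing that $E_{x_1}[\tilde u]=rE_{x_1}[u]<1$ and that multiplying the $i$th equation of \rref{eq:uv-equations} by $r^i$ reproduces exactly the relations of that theorem; this buys you a clean treatment of general $m$ with no new computation, at the price of inheriting whatever machinery went into Theorem~\ref{th:Abel-solution-r-equals-1}, and it makes transparent why the $r$-dependence enters only through the grading $\tilde v_i = r^i v_i$ (consistent with $c^\prime_{A,m}(n)=r^n c_{A,m}(n)$ used later in Section~\ref{sect:center-condition}). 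Your ``self-contained alternative'' is essentially the paper's proof, but done uniformly in $m$: with $z=r/(1-rE_{x_1}[u])$ one has $v_i=u_i-u_{i+1}z$ for $i<m$, $v_m=u_m$, and the sum $\sum_{i=1}^m v_iz^{i+1}$ telescopes to $u_1z^2=\dot z$, which is arguably a worthwhile improvement over the paper's $m=2$ illustration. Your added remarks on using the strict bound $E_{x_1}[u](t)<1/r$ (together with continuity of $E_{x_1}[u]$ on the compact interval) to preclude blow-up before time $\omega$, and on Carath\'eodory uniqueness to identify the closed-form expression with \emph{the} solution, address points the paper leaves implicit.
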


\begpr
The claim is proved by showing that if the system \rref{eq:uv-equations} has the solution $u_1$, $u_2$,\ldots, $u_m$ then the Abel equation (\ref{eq:Abel-eqn-degree-m}) with $z(0)=r<R$ has the solution
\begin{equation} \label{eq:Abel-equation-solution-r-leq-R}
	z(t)=\frac{r}{1-rE_{x_1}[u](t)}.
\end{equation}
In which case, $z(0)=z(\omega)=r$ for all $r<R$ so that $z=0$ is a center.

Consider the case where $m=2$ for simplicity. The proposed solution for \rref{eq:Abel-eqn-degree-m} can be checked by direct substitution. That is,
\begdi
	\dot{z}(t)=\frac{r^2}{\left(1-rE_{x_1}[u](t)\right)^2}u_1(t),
\enddi
so that
\begin{align*}
	v_1(t)z^2(t)+v_2(t)z^3(t)
	&=\left[u_1(t)-\frac{r u_2(t)}{1-rE_{x_1}[u](t)}\right]\left[\frac{r}{1-rE_{x_1}[u](t)}\right]^2+ \\
	&\hspace*{0.18in} u_2(t)\left[\frac{r}{1-rE_{x_1}[u](t)}\right]^3 \\
	&=\frac{r^2}{(1-rE_{x_1}[u](t))^2}u_1(t)
\end{align*}
as expected.
\endpr

Recall it was shown in Theorem~\ref{th:Abel-solution-r-equals-1} where $z(0)=1$ that $z(t)=F_{c_{A,m}}[v](t)=1/(1-E_{x_1}[u](t))$. So for sufficiently small $R>0$ and given any $r<R$ the solution to equation~\rref{eq:Abel-eqn-degree-m} with $z(0)=r$ can be written in the form
\begdi
	z(t)=r\sum_{n=1}^\infty F_{c_{A,m}(n)}[v](t)r^n
	=r\sum_{n=1}^\infty F_{r^nc_{A,m}(n)}[v](t)=:r\sum_{n=1}^\infty F_{c^\prime_{A,m}(n)}[v](t).
\enddi
So letting $c_{A,m}^\prime:=\sum_{n=1}^\infty c_{A,m}^\prime(n)$, the composition condition \rref{eq:composition-condition} ensures periodic solutions because
\begin{align*}
	z(\omega)&=rF_{c_{A.m}^\prime}[v](\omega)
	=r\sum_{\eta\in X^\ast} \langle c_{A,m}^\prime,\eta \rangle E_\eta[v](\omega) \\
		&=r\sum_{\eta\in X^\ast} \langle c_{A,m}^\prime,\eta \rangle E_\eta[\bar{v}](q(\omega))
		=r\sum_{\eta\in X^\ast} \langle c_{A,m}^\prime,\eta \rangle E_\eta[\bar{v}](q(0)) \\
		&=rE_{\emptyset}[\bar{v}](q(0))=r=z(0),
\end{align*}
using the fact that $E_\eta[\bar{v}](q(0))=0$ for all $\eta\neq \emptyset$. Put another way, the composition condition gives periodic solutions by simply ensuring that $E_{\eta}[v](\omega)=0$ for every nonempty word $\eta\in X^\ast$. In which case, it is immediate from the shuffle  identity $x_i^{\shuffle k}=k!x_i^k$
that the {\em moment conditions} with respect to $v$
\begdi
	\int_{0}^\omega v_i(\tau)E^k_{x_1}[v](\tau)\,d\tau
	=k!E_{x_ix_1^k}[v](\omega)=0, \quad i=2,3,\ldots,m,\;\;k\geq 0
\enddi
are satisfied. It is known for polynomial $v_i$, however, that the moment conditions do not imply the composition condition \cite{Gine-etal_16}. The following theorem indicates a condition under which the two conditions are satisfied with respect to the $u_i$ functions.

\begth
Suppose the $v_1,v_2,\ldots,v_m\in L_1[0,\omega]$ satisfy the composition condition. Let $u_1,u_2,\ldots,u_m\in L_1[0,\omega]$ be any solution to \rref{eq:uv-equations} with $E_{x_1}[u](t):=\int_0^t u_1(\tau)\,d\tau <1/r$ on the interval $[0,\omega]$. Then the composition condition and the moment conditions with respect to the $u_i$ are equivalent.
\endth

\begpr
Integrating both sides of \rref{eq:uv-equations} over $[0,\omega]$ gives
\begin{align*}
	E_{x_{i}}[v](\omega) &=E_{x_{i}}[u](\omega)-
	r\sum_{k=0}^\infty r^k \int_0^\omega u_{i+1}(t) E^k_{x_1}[u](\tau)\,d\tau \\
	=&E_{x_{i}}[u](\omega)-r\sum_{k=0}^\infty r^k k! E_{x_{i+1}x^k_1}[u](\omega)
\end{align*}
for $i=1,2,\ldots,m-1$ with $E_{x_m}[v](\omega)=E_{x_m}[u](\omega)$. Therefore, if the $v_i$ satisfy the composition condition then the left-hand side of this equation is zero. In which case, the claim follows immediately.
\endpr


\end{document}